\newtheorem{assumption}{Assumption}
\begin{document}
\title{ Convergence and Applications of ADMM on the Multi-convex Problems}
\author{Junxiang Wang   \and Liang Zhao\\\email{jwan936@emory.edu} \quad\quad \email{lzhao41@emory.edu}}
\institute{
Emory University, 201 Dowman Drive, Atlanta, Georgia, USA
}
\maketitle
\begin{abstract}
  In recent years, although the Alternating Direction Method of Multipliers (ADMM) has been empirically applied widely to many multi-convex applications, delivering an impressive performance in areas such as nonnegative matrix factorization and sparse dictionary learning, there remains a dearth of generic work on proposed ADMM with a convergence guarantee under mild conditions. In this paper, we propose a generic ADMM framework with multiple coupled variables in both objective and constraints. Convergence to a Nash point is proven with a sublinear convergence rate $o(1/k)$. Two important applications are discussed as special cases under our proposed ADMM framework. Extensive experiments on ten real-world datasets demonstrate the proposed framework's effectiveness, scalability, and convergence properties. We have released our code at \url{https://github.com/xianggebenben/miADMM}.
\end{abstract}
\section{Introduction}
\indent Due to the advantages and popularity of non-differentiable regularized and distributive computing for complex optimization problems, the Alternating Direction Method of Multipliers (ADMM) has received a great deal of attention in recent years \cite{boyd2011distributed}. The standard ADMM was originally proposed to solve the following separable convex optimization problem:
\begin{align*}
    \min\nolimits_{x,z} f(x)+g(z)\ \ \ s.t. \ Ax+Bz=c.
\end{align*}
where $f(x)$ and $g(z)$ are closed convex functions, $A$ and $B$ are matrices and $c$ is a vector. There are extensive reports in the literature exploring the theoretical properties for convex optimization problems related to ADMM and its variants, including multi-block ADMM \cite{deng2017parallel}, Bregman ADMM \cite{wang2014bregman}, fast ADMM \cite{goldstein2014fast,kadkhodaie2015accelerated}, and stochastic ADMM \cite{ouyang2013stochastic}. ADMM has now been extended to cover a wide range of nonconvex problems, and it has achieved outstanding performance in many practical applications \cite{xu2016empirical}. 

Unlike convex problems, the convergence theory on the nonconvex  ADMM is much more difficult, and considerable progress has been made on this problem, please refer to Section \ref{sec:related_work} for a detailed summary.
Recently, however, there has been an increasing number of real-world applications where the objective functions are multi-convex (i.e. nonconvex for all the variables but convex for each when all the others are fixed).  For example, nonnegative matrix factorization, which aims to decompose a matrix into a product of two matrices, has been applied widely in computer vision, machine learning, and various other fields \cite{lee2001algorithms}; A bilinear matrix inequality problem has been designed for the analysis of linear and nonlinear uncertain systems \cite{hassibi1999path}. \\
\indent All of the above applications can be considered as special cases of multi-convex optimization problems. However,  such problems have not yet been rigorously and systematically investigated by ADMM. Moreover, the convergence properties of the ADMM required to solve such problems remain unknown. In this work, we propose mild conditions to ensure the convergence of ADMM to a Nash point on the multi-convex problems with a sublinear convergence rate $o(1/k)$. We also discuss how our ADMM is applied to two important applications. Extensive experiments show the effectiveness of our proposed ADMM.  Our contributions in this paper include:
\begin{itemize}
\item  We propose an ADMM framework to solve the multi-convex problem, and we investigate the convergence properties of the proposed ADMM. Specifically, we prove that the objective value and the residual are convergent. Moreover, any limit point generated by the proposed ADMM is a Nash point of the original problem. The convergence rate of the proposed ADMM is $o(1/k)$.
\item  We demonstrate two important and promising applications that are special cases of our proposed ADMM framework and benefit from its theoretical properties. Specifically, we show how these applications can be transformed equivalently to fit into the proposed ADMM framework.
\item We conduct extensive experiments to validate our proposed ADMM. Experiments on ten real-world datasets demonstrate its effectiveness, scalability, and convergence properties.
\end{itemize}
\indent The rest of this paper is summarized as follows: Section \ref{sec:related_work} summarizes previous work related to this paper. Section \ref{sec:proposed ADMM} introduces the ADMM algorithm and its convergence properties. In Section \ref{sec:application}, the proposed ADMM algorithm is applied to several important applications. Extensive experiments are described in Section \ref{sec:experiment}. The paper concludes with a summary of the work in Section \ref{sec:conclusion}.
\section{Related Work}
\label{sec:related_work}
\textbf{Multi-convex optimization problems:} Some works studied multi-convex problems. The earliest work required that the objective function was differentiable continuous and strictly convex \cite{warga1963minimizing}. Various conditions on separability and regularity on the objective functions have been discussed in \cite{tseng1993dual,tseng2001convergence}. In the most recent work, Xu and Yin presented three types of multi-convex algorithms and analyzed convergence based on either Lipschitz differentiability or strong convexity assumption  \cite{xu2013block}. For a comprehensive survey, please refer to \cite{shen2017disciplined}.\\
\textbf{Convergence analysis of
 ADMM:} Existing literature on the convergence analysis of ADMM can be categorized into two classes: the convex ADMM and the nonconvex ADMM. The convex ADMM is investigated relatively well compared with the nonconvex ADMM. Existing works either study suitable stepsizes of the convex ADMM or extend ADMM to the stochastic version. For example, Bai et al. proposed a generalized symmetric ADMM to solve the multi-block separable objective by updating the Lagrange multiplier twice with suitable stepsizes \cite{bai2018generalized}; Gu et al. extended contractive Peaceman-Rachford splitting method to ADMM with larger stepsizes \cite{gu2015semi}; Ouyang et al. proposed a stochastic ADMM with a convergence rate $O(\frac{1}{\sqrt{t}})$.  Despite the outstanding performance of the nonconvex ADMM, its convergence theory is not well established due to the complexity of both coupled objectives and various (inequality and equality) constraints. Most existing works discussed the convergence of the nonconvex ADMM on separable objectives: they provided convergence guarantee to the stationary solutions with different assumptions \cite{boct2020proximal,chao2021inertial,chartrand2013nonconvex,li2015global}. Some works explored more difficult cases where the objectives are coupled: for example, Wang et al. presented mild convergence conditions of the nonconvex ADMM where the objective can be nonsmooth \cite{wang2019global}; Gao et al. explored the convergence condition of ADMM on multi-affine constraints \cite{gao2020admm};  Wang et al. gave the convergence proofs of ADMM in the nonconvex deep learning problems \cite{wang2020toward,wang2021towards,wang2019admm}; while experiments by Wang and Zhao showed that the ADMM was not necessarily convergent in the nonlinear-constrained problems \cite{wang2017nonconvex}.
\section{ADMM on the Multi-convex Problems}
\label{sec:proposed ADMM}
\indent In this section, we present an ADMM framework to solve Problem 1.
\subsection{Preliminaries}
\label{sec:preliminary}
\indent First, the definition of Lipschitz differentiability is shown as follows \cite{cavalletti2016tangent}:
\begin{definition}[Lipschitz Differentiability] Any arbitrary differentiable function
$G_1: \mathbb{R}^m\rightarrow \mathbb{R}$ is Lipschitz differentiable if for any $x^{'}, x^{''}\in \mathbb{R}^m$,
\begin{align*}
    \Vert \nabla G_1(x^{'})-\nabla G_1(x^{''})\Vert \leq  D \Vert x^{'}-x^{''}\Vert.
\end{align*}
where $D\geq 0$ is constant and $\nabla G_1(x)$ denotes the gradient of $G_1(x)$. 
\end{definition}
The following defines strong convexity, which is indispensable for the proof of convergence to a Nash point. 
\begin{definition}[Strong Convexity]
A convex function $G_4(x)$ is strongly convex if there exists $H>0$ such that for $\forall x^{'},x^{''}\in dom(G_4)$, the following holds
\begin{align*}
    G_4(x^{''})\geq G_4(x^{'})+(v^{'})^T(x^{''}-x^{'})+(H/2)\Vert x^{''}-x^{'}\Vert^2_2.
\end{align*}
where $\forall v^{'}\in\partial G_4(x^{'})$ is a subdifferential of $G_4$ at $x^{'}$.
\end{definition}
Finally, the Nash point is defined as follows \cite{xu2013block}:
\begin{definition}[Nash Point]
Given $G_5(a_1,a_2,\cdots,a_m)$, a Nash point $(a^*_1,a^*_2,\cdots,a^*_m)$ satisfies the following property:
\begin{align*}
    &G_5(a^*_1,\cdots, a^*_{i-1},a^*_i, a^*_{i+1},\cdots,a^*_m)\leq     G_5(a^*_1,\cdots, a^*_{i-1},a_i,a^*_{i+1},\cdots,a^*_m), \\&\forall (a^*_1,\cdots, a^*_{i-1},a_i,a^*_{i+1},\cdots,a^*_m)\in dom(G_5), \ (i=1,\cdots,m).
\end{align*}
\end{definition}
\indent Naturally, when we optimize one variable while fixing others, the Nash point ensures the optimality of this variable \cite{xu2013block}. Without loss of generality, we assume that Problem 1 has at least a Nash point, and in the next section, we will prove that any limit point generated by ADMM converges to a Nash point.
\subsection{The ADMM algorithm}
\label{sec:algorithm}
The following problem is our focus in this paper:
\begin{problem}
\label{prob:main problem}
  \begin{align*}\nonumber \min\nolimits_{x_1,\cdots,x_n,z} F(x_1,\cdots,x_n,z)=f(x_1,\cdots,x_n)\!+h(z)\ \ \  s.t. \sum\nolimits_{i=1}^n A_i x_i-z=0.
\end{align*}
\end{problem}
where $x_i\in \mathbb{R}^{p_i}(i=1,\cdots,n)$, $z\in \mathbb{R}^{q}$, $f(x_1,\cdots,x_n): \mathbb{R}^p\rightarrow \mathbb{R}\cup\{\infty\}(p=\sum\nolimits_{i=1}^n p_i)$ are proper, continuous, multi-convex and possibly nonsmooth functions, $h(z)$ is a proper, differentiable and convex function. $A_i\in \mathbb{R}^{q\times p_i}(i=1,\cdots,n)$ are matrices. Obviously, the domain of $F$ is $dom(F)=\{(x_1,\cdots, x_n,z)| \ \sum\nolimits_{i=1}^n A_i x_i-z=0\}$. Without the loss of generality, the objective of Problem 1 is assumed to be bounded from below. \\
\indent To ensure the convergence of the proposed ADMM, some mild assumptions are imposed, which are shown as follows:
\begin{assumption}[Lipschitz Differentiability]
$h(z)$ is Lipschitz differentiable with constant $H\geq 0$.
\label{ass:lipschitz differentiability and Coercivity}
\end{assumption}
\indent  Most loss functions such as the cross-entropy loss and the square loss are Lipschitz differentiable \cite{wang2019admm}. In order to propose the  ADMM algorithm, the augmented Lagrangian function can be formulated mathematically as follows:
\begin{align}
     L_\rho(x_1,\!\cdots,\!x_n,\!z,\!y)&=F(x_1,\!\cdots,\!x_n,z)\!+\!y^T(\sum\nolimits_{i\!=\!1}^n A_i x_i\!-\!z)\!+\!(\rho/2)\Vert \sum\nolimits_{i\!=\!1}^n A_i x_i\!-\!z \Vert^2_2. \label{eq:lagrangian function}
\end{align}
where $y$ is a dual variable and $\rho>0$ is a penalty parameter. The proposed ADMM aims to optimize the following $n+1$ subproblems alternately.
\begin{align}
    \nonumber  &x_i^{k+1} \leftarrow \arg\min\nolimits_{x_i} f(\cdots,x^{k+1}_{i-1},x_i,x^{k}_{i+1},\cdots)+(y^k)^TA_ix_i\\&+(\rho/2)\Vert\sum\nolimits_{j=1}^{i-1} A_j x^{k+1}_j+A_ix_i+\sum\nolimits_{j=i+1}^{n} A_jx^k_j-z^k\Vert^2_2.  \label{eq:update x}\\
    &z^{k+1} \leftarrow \arg\min\nolimits_{z} L_\rho(\cdots,x_n^{k+1},z,y^k)\label{eq:update z}\\\nonumber&\!=\!\arg\min\nolimits_z h(z)\!-\!(y^k)^Tz\!+\!(\rho/2)\Vert \sum\nolimits_{i\!=\!1}^n A_ix^{k+1}_i\!-\!z\Vert^2_2.
\end{align}
\begin{wrapfigure}{r}{0.5\linewidth}
\vspace{-1.5cm}
\begin{minipage}{\linewidth}
\begin{algorithm}[H]
\scriptsize
\caption{  The Proposed ADMM to Solve Problem 1} 
\begin{algorithmic}[1] 
\REQUIRE $A_i(i=1,\cdots,n),\delta>0$. 
\ENSURE $x_i(i=1,\cdots,n),z$. 
\STATE Initialize $\rho$, $k=0$.
\REPEAT
\FOR{i=1 to n}
\STATE Update $x^{k+1}_i$ in Equation \eqref{eq:update x}.
\ENDFOR
\STATE Update $z^{k+1}$ in Equation \eqref{eq:update z}.
\STATE $r^{k+1}\leftarrow \sum\nolimits_{i=1}^n A_ix_i^{k+1}-z^{k+1}$. \# update primal residual
\STATE $y^{k+1}\leftarrow y^k+\rho r^{k+1}$.\\
\STATE $k\leftarrow k+1$.
\UNTIL $\Vert r^{k+1}\Vert\leq \delta$.
\STATE Output $x_i(i=1,\cdots,n),z$.
\end{algorithmic}
\label{algo:proposed ADMM}
\end{algorithm}
\vspace{-1.5cm}
\end{minipage}
\end{wrapfigure}

Algorithm \ref{algo:proposed ADMM} is presented for Problem 1. Concretely, Lines 3-5 and 6 update $x^{k+1}_i(i=1,\cdots,n)$ and $z^{k+1}$, respectively. Line 7 updates the primal residual $r^{k+1}$, which is defined in accordance with the standard ADMM \cite{boyd2011distributed}: it measures how the linear constraint $\sum\nolimits_{i=1}^n A_ix_i-z=0$ is violated.  Line 8 updates the dual variable $y^{k+1}$, which follows the routine of the standard ADMM. Line 10 uses the norm of the primal residual $r$ as a condition to terminate the algorithm, where $\delta>0$ is a threshold. Each subproblem is convex and implicitly assumed to be solvable.
\subsection{Convergence Analysis}
\label{sec:convergence}




\indent This section focuses on the convergence of the proposed ADMM algorithm. Specifically, 
 the first lemma states that the augmented Lagrangian  $L_\rho$ keeps decreasing, which is stated as follows.
\begin{lemma}[Objective Descent] 
\label{lemma:objective descent}
If $\rho>2H$ so that $C_1=\rho/2-H/2-H^2/\rho>0$, then there exists $C_2=\min(\rho/2,C_1)$ such that
\begin{align}
\nonumber &L_\rho(x_1^k,\cdots,x_n^k,z^k,y^k)-L_\rho(x_1^{k+1},\cdots,x_n^{k+1},z^{k+1},y^{k+1})\\&\geq C_2(\Vert z^{k+1}-z^k\Vert^2_2+\sum\nolimits_{i=1}^n\Vert A_i(x_i^{k+1}-x_i^{k})\Vert^2_2).\label{ineq: objective descent} 
\end{align}
\end{lemma}
\indent   Lemma \ref{lemma:objective descent} holds under Assumption \ref{ass:lipschitz differentiability and Coercivity}, and its proof can be found in Section \ref{sec: proof of objective descent and objective bound} in the supplementary materials\footnote{The supplementary materials are available at \url{https://github.com/xianggebenben/miADMM/blob/main/multi_convex_ADMM-13-18.pdf}\label{fn:supplementary}}. The next lemma states that the augmented Lagrangian is bounded from below, as shown below:
\begin{lemma}[Objective Bound]
\label{lemma:objective bound} If $\rho> 2H$, then 
$L_\rho(x_1^k,\cdots,x^k_n,z^k,y^k)$ is lower bounded.
\end{lemma}
  The proof of Lemma \ref{lemma:objective bound} can be found in Section \ref{sec: proof of objective descent and objective bound} in the supplementary materials \footref{fn:supplementary}.
 Now we can prove that the proposed ADMM converges globally in the following theorem.
\begin{theorem}[Residual and Objective Convergence]
If $\rho>2H$, then for the bounded sequence $(x_1^k,\cdots, x_n^k,z^k,y^k)$, then it has the following properties:\\
a). Residual convergence. This means that as $k \rightarrow \infty$, $ r^{k}\rightarrow 0$, where $r^k$  is defined in Algorithm \ref{algo:proposed ADMM}.\\
b). Objective convergence. This means that as $k\rightarrow \infty$, $F(x^k_1,\cdots,x^k_n,z^k)$ converges.
\label{thero: theorem 2}
\end{theorem}

\indent Theorem \ref{thero: theorem 2} guarantees the convergence of the proposed ADMM, whose proof is in Section \ref{sec:convergence proof} in the supplementary materials \footref{fn:supplementary}. However, $x^k_i(i=1,\cdots,n)$ and $z^k$ are not necessarily shown to be convergent. The next theorem states that any limit point is a feasible Nash Point of Problem 1.
\begin{theorem}[Convergence to a Nash Point]
Let $\rho> 2H$, if \textbf{either} of two assumptions hold: (a). $A_i(i=1,\cdots,n)$ have full rank. (b). $F$ is strongly convex with regard to $x_i$.
Then for bounded variables $(x^k_1,\cdots,x^k_n,z^k)$, it has at least a limit point $(x^*_1,\cdots,x^*_n,z^*)$, and any limit point $(x^*_1,\cdots,x^*_n,z^*)$ is a  feasible Nash point of $F$ defined in Problem 1. That is
\begin{align*}
    &\sum\nolimits A_ix^*_i-z^*=0. \ \text{(feasibility)}\\
    &F(x^*_1,\cdots,x^*_n,z^*)\leq    F(x^*_1,\cdots, x^*_{i-1},x_i,x^*_{i+1},\cdots,x^*_n,z^*),\\& \forall (x^*_1,\cdots, x^*_{i-1},x_i,x^*_{i+1},\cdots,x^*_n,z^*)\in dom(F),(i=1,\cdots,n).\\&
    F(x^*_1,\cdots,x^*_n,z^*)\leq     F(x^*_1,\cdots, x^*_n,z) \forall (x^*_1,\cdots,x^*_n,z)\in dom(F) \ \text{(Nash point)}.
\end{align*}
\label{thero: Nash point theorem}
\end{theorem}
\vspace{-0.5cm}
\indent The proof of Theorem \ref{thero: Nash point theorem} is detailed in Section \ref{sec:convergence proof} in the supplementary materials \footref{fn:supplementary}.
The third theorem states that our proposed ADMM can achieve a sublinear convergence rate of $o(1/k)$ under Assumption \ref{ass:lipschitz differentiability and Coercivity}, despite the nonconvex and complex nature of Problem 1. Such a rate is state-of-the-art even compared to those methods for simpler convex problems. The theorem is shown as follows:
\begin{theorem}[Convergence Rate]
If $\rho>2H$, for a bounded sequence $(x^k_1,\cdots,x^k_n,z^k,y^k)$,  define $u_k=\min\nolimits_{0\leq l\leq k}(\Vert z^{l+1}-z^l\Vert^2_2+\sum\nolimits_{i=1}^n\Vert A_i(x_i^{l+1}-x_i^{l})\Vert^2_2)$, then the convergence rate of $u_k$ is $o(1/k)$.
\label{thero: theorem 3}
\end{theorem}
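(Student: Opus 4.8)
The plan is to read the rate off the Sufficient Descent property (Property~\ref{pro:property 2}) together with the lower boundedness of $L_\rho$ (Property~\ref{pro:property 1}), and then invoke a standard partial-sum argument. First I would abbreviate $a_l := \Vert z^{l+1}-z^l\Vert_2^2 + \sum_{i=1}^n\Vert A_i(x_i^{l+1}-x_i^l)\Vert_2^2 \ge 0$, so that $u_k = \min_{0\le l\le k} a_l$. Summing the inequality in Equation~\eqref{eq: property2} over $l = 0,\dots,k$ telescopes its left-hand side, yielding $L_\rho(x_1^0,\dots,x_n^0,z^0,y^0) - L_\rho(x_1^{k+1},\dots,x_n^{k+1},z^{k+1},y^{k+1}) \ge C_2\sum_{l=0}^k a_l$. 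Since $C_2>0$ and $L_\rho$ is lower bounded by Property~\ref{pro:property 1}, letting $k\to\infty$ gives $\sum_{l=0}^{\infty} a_l < \infty$; in particular the tail $T_m := \sum_{l\ge m} a_l$ satisfies $T_m\to 0$ as $m\to\infty$.

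The second step is an elementary lemma: if $a_l\ge 0$ and $\sum_l a_l<\infty$, then $k\cdot\min_{0\le l\le k} a_l \to 0$. To see this, fix $k$ and observe that $a_l \ge \min_{0\le l'\le k} a_{l'} = u_k$ for every index $l$, in particular for every $l$ in the range $\lceil k/2\rceil \le l \le k$, which contains at least $\lfloor k/2\rfloor$ indices. Hence $T_{\lceil k/2\rceil} \ge \sum_{l=\lceil k/2\rceil}^{k} a_l \ge \lfloor k/2\rfloor\, u_k$. Because $T_{\lceil k/2\rceil}\to 0$ as $k\to\infty$, it follows that $\lfloor k/2\rfloor\, u_k \to 0$, i.e.\ $k\,u_k\to 0$, which is exactly the statement $u_k = o(1/k)$.

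I expect essentially all the substance to reside in Properties~\ref{pro:property 1}--\ref{pro:property 2}, which already encapsulate the use of multi-convexity, full column rank of the $A_i$, and the Lipschitz constants; the rate argument itself is routine once summability is in hand. The only point needing a little care is the distinction between $O(1/k)$ and $o(1/k)$: the cruder estimate $u_k \le \tfrac{1}{k+1}\sum_{l=0}^{k} a_l$ only delivers $O(1/k)$, so the halving/tail-estimate step above is what upgrades the bound to the claimed $o(1/k)$.
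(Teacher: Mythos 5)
Your proposal is correct and follows essentially the same route as the paper: telescoping the Sufficient Descent inequality (Property~\ref{pro:property 2}) and using the lower bound from Property~\ref{pro:property 1} to get summability of the per-iteration quantities, then converting summability of the running minimum into the $o(1/k)$ rate. The only difference is that the paper delegates this last step to Lemma~1.2 of \cite{deng2017parallel} (after checking monotonicity, nonnegativity, and summability of $u_k$), whereas you prove it directly with the standard tail/halving estimate --- a harmless and in fact more self-contained finish.
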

\indent The proof of this theorem is in Section \ref{sec:convergence proof} in the supplementary materials \footref{fn:supplementary}. The $o(1/k)$ convergence rate of the proposed ADMM is consistent with much existing work analyzing the convex ADMM, including \cite{deng2017parallel,he20121,lin2015sublinear}. Our contribution in term of convergence rate is that we extend the guarantee of $o(1/k)$ into the multi-convex Problem 1.\\
\indent Our proposed ADMM is more general than some influential works in terms of formulation. The relations between our proposed ADMM and previous works are summarized as follows:\\
\textbf{1. Generalization of Block Coordinate Descent (BCD) for multi-convex problems.} When the linear constraint $\sum\nolimits_{i=1}^n A_ix_i=z$ is removed in Problem 1, then the proposed ADMM is reduced to the Block Coordinate Descent  \cite{xu2013block}.\\
\textbf{2. Generalization of multi-block ADMM.}
When $f(x_1,\cdots,x_n)=0$, the proposed ADMM is reduced to the convex multi-block ADMM \cite{tao2016convergence}, i.e. the ADMM with no less than three variables.\\
\indent Apart from general formulations, the convergence guarantees of our proposed ADMM cover more applications than previous literature. For example,  \cite{wang2019global} requires the coupled objective $f(x_1,\cdots,x_n)$ to be Lipschitz differentiable. However, some important applications such as weakly-constrained multi-task learning (Section \ref{sec:weak_multitask}) and learning with signed-network constraints (Section \ref{sec:network_constraints}) do not satisfy this condition. But they are covered by our convergence guarantees of the multi-convex ADMM to a Nash point.
\section{Applications}
\label{sec:application}
\indent In this section, we apply our proposed ADMM to two real-world applications, both of which conform to Problem 1 and benefit from the convergence properties of the proposed ADMM.


\subsection{Weakly-constrained Multi-task Learning}
\label{sec:weak_multitask}
{ In multi-task learning problems, multiple tasks are learned jointly to achieve a better performance compared with learning tasks independently \cite{wang2018incomplete}. Most work on multi-task learning has tended to enforce the assumption of similarity among the feature weight values across tasks \cite{argyriou2007multi,chen2011integrating,wang2017multi,wang2018incomplete,zhou2011malsar} because this makes it possible to use convex regularization terms like $\ell_{2,1}$ norms \cite{wang2017multi} and Graph Laplacians \cite{zhou2011malsar}. However, this assumption is usually too strong and is seldom satisfied by the real-world data. Instead of requiring feature weights to be similar in magnitude, a more conservative but probably more reasonable assumption is that multiple tasks share similar polarities for the same feature, which means that if a feature is positively relevant to the output of a task, then its weight will also be positive for other related tasks. This assumption is appropriate for many applications. For example, the feature `number of clinic visits' will be positively related to flu outbreaks, while the feature `popularity of vaccination' will be negatively related to them, even though their feature weights can vary dramatically for different countries (namely tasks here). This is achieved by enforcing the requirement for every pair of tasks with neighboring indices to have the same weight sign. This optimization objective is shown as follows:}
\begin{align}
    &\min\nolimits_{w_1,\cdots,w_n}\sum\nolimits_{i=1}^n (Loss_i(w_i)+\Omega_i(w_i)) \label{eq: weakly-constrained multi-task learning}\\ \nonumber &s.t. \ w_{i,j}w_{i+1,j}\geq 0 \ (i=1,2,\cdots,n-1, j=1,2,\cdots,m).
\end{align}
where $n$ and $m$ denote the number of tasks and features, respectively, $w_{i,j}$ is the weight of the $j$-th feature in the $i$-th task, $w_i$ is the weight of the $i$-th task, and $Loss_i(w_i)$ and $\Omega_i(w_i)$ are the loss function and the regularization term of the $i$-th task, respectively.  The inequality constraint implies that the $i$-th  and the $i+1$-th tasks share the same sign for their weights. Equation \eqref{eq: weakly-constrained multi-task learning} is rewritten in the following form to fit in our proposed ADMM framework:
\begin{align}
 &\min\nolimits_{w_1,\cdots,w_n,z} \sum\nolimits_{i=1}^n (Loss_i(w_i)+\Omega_i(z_i))+\lambda_1\sum\nolimits_{i=1}^{n-1}\sum\nolimits_{j=1}^{m}c_1(w_{i,j}w_{i+1,j}) \label{prob: multi-task learning}\\
    &\nonumber s.t. \ z_i=w_i \ (i=1,2,\cdots,n).
\end{align}
where $z=[z_1;\cdots;z_n]$ is an auxiliary variable, and $\lambda_1>0$ is a tuning parameter. Notice that the inequality constraint $w_{i,j}w_{i+1,y}\geq 0$ is transformed  to a quadratic penalty $c_1(x)$ such that
$
    c_1(x)=
    \begin{cases}
    x^2 & x<0\\
    0 & x\geq 0
    \end{cases}
$
 which makes the formulation consistent with Problem \ref{prob:main problem}. The proposed ADMM algorithm for this case is shown in Appendix \ref{sec:multi-task learning} in the supplementary materials \footref{fn:supplementary}.
\subsection{Learning with Signed-Network Constraints}
\label{sec:network_constraints}
\indent The application of network models for social network analysis has attracted the attention of a large number of researchers \cite{carrington2005models}.  For example, influential societal events often spread across many social networking sites and are expressed in different languages. Such multi-lingual indicators usually transmit similar semantic information through networks and have thus been utilized to facilitate social event forecasting \cite{zhao2018distant}. The problem with network constraints is formulated as follows:
\begin{gather*}
    \min\nolimits_{\beta_1,\cdots,\beta_n} Loss(\beta_1,\cdots,\beta_n)+\sum\nolimits_{i=1}^n \omega_i(\beta_i)\\
     s.t. \  \exists (\beta_i,\beta_j)\in E_s, \exists (\beta_p,\beta_q)\in  E_d \ (1\leq i,j,p,q\leq n).
\end{gather*}
where $\beta_i$ is the weight of the $i$-th node. $Loss(\beta_1,\cdots,\beta_n)$ is a loss function and $\omega_i(\beta_i)$ is a regularization term for the $i$-th node. $E_s=\{(\beta_i,\beta_j)|\beta_i \beta_j\geq 0\}$ and $E_d=\{(\beta_p,\beta_q)|\beta_p \beta_q\leq 0\}$ are two edge sets to represent two opposite relationships: $(\beta_i,\beta_j) \in E_s$ means that $\beta_i\beta_j\geq 0$, while $(\beta_p,\beta_q) \in E_d$ means that  $\beta_p\beta_q\leq 0$. The constraint means that some pair  $(\beta_i,\beta_j)$ satisfies the edge set $E_s$, and that some pair  $(\beta_p,\beta_q)$ satisfies the edge set $E_d$. For example, in the problem of  social event forecasting with French and English, $E_s$ and $E_d$ are edge sets of synonyms and antonyms between French and English, and the weight pair of the French word "bien" and the English word "good" belongs to $E_s$. The problem with network constraints can be reformulated approximately
to the following:
\begin{align}
 \nonumber&\min\nolimits_{\beta_1,\cdots,\beta_n,z} Loss(\beta_1,\cdots,\beta_n)+\sum\nolimits_{i=1}^n \omega_i(z_i)+\lambda_2(\sum\nolimits_{(\beta_i,\beta_j)\in E_s} c_2(\beta_i,\beta_j)\\&+\sum\nolimits_{(\beta_p,\beta_q)\in E_d} c_3(\beta_p,\beta_q)) s.t. \ z_i=\beta_i \ (i=1,2,\cdots,n)\label{prob: muli-lingual}
\end{align}
where $z=[z_1;\cdots;z_n]$ is an auxiliary variable, and $\lambda_2>0$ is a tuning parameter. The constraint $(\beta_i,\beta_j)\in E_s$ and $(\beta_p,\beta_q)\in  E_d (1\leq i,j,p,q\leq n)$ are transformed to two quadratic penalties $c_2(\beta_i,\beta_j)$ and $c_3(\beta_p,\beta_q)$ as follows:\\
$c_2(\beta_i,\beta_j)=
    \begin{cases}
    (\beta_i\beta_j)^2 & (\beta_i,\beta_j)\not\in E_s\\
    0 & (\beta_i,\beta_j)\in E_s
    \end{cases}, c_3(\beta_p,\beta_q)=
    \begin{cases}
    (\beta_p\beta_q)^2 & (\beta_p,\beta_q)\not\in E_d\\
    0 & (\beta_p,\beta_q)\in E_d\end{cases}
$.\\
The proposed ADMM for this case is also shown in Appendix \ref{sec:muli-lingual} in the supplementary materials \footref{fn:supplementary}. 
\section{Experiments}
\label{sec:experiment}
In this section, we test our proposed ADMM using ten real-world datasets on two applications detailed in Section \ref{sec:application}.  Scalability, effectiveness, and convergence properties are compared with several existing state-of-the-art methods on ten real datasets. All experiments were conducted on a 64-bit machine with Intel(R) Core(TM)  processor (i7-6820HQ CPU@ 2.70GHZ) and 16.0GB memory.
 \begin{figure*}[h]
 \vspace{-0.5cm}
 \centering
 \begin{minipage}{0.24\linewidth}
   \includegraphics[width=\textwidth]{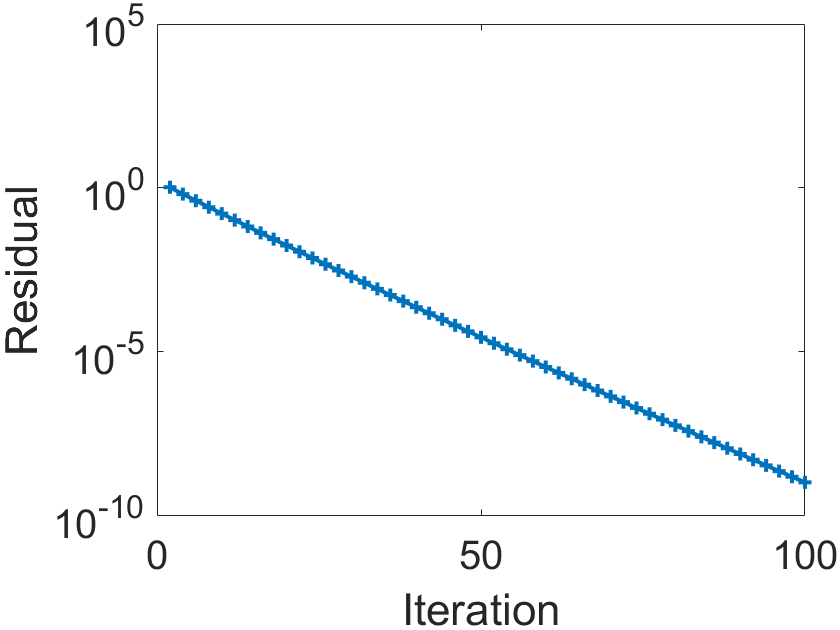}
   \centerline{(a). Residual on}
    \centerline{Experiment I.}

 \end{minipage}
 \hfill
 \begin{minipage}{0.24\linewidth}
   \includegraphics[width=\textwidth]{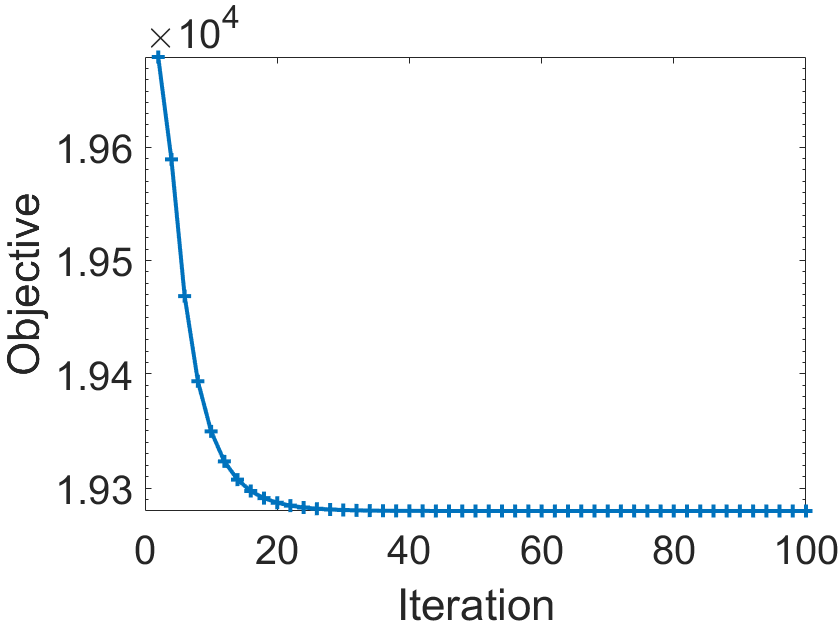}
    \centerline{(b). Objective on}
    \centerline{Experiment I.}
 \end{minipage}
 \hfill
 \begin{minipage}{0.24\linewidth}
   \includegraphics[width=\textwidth]{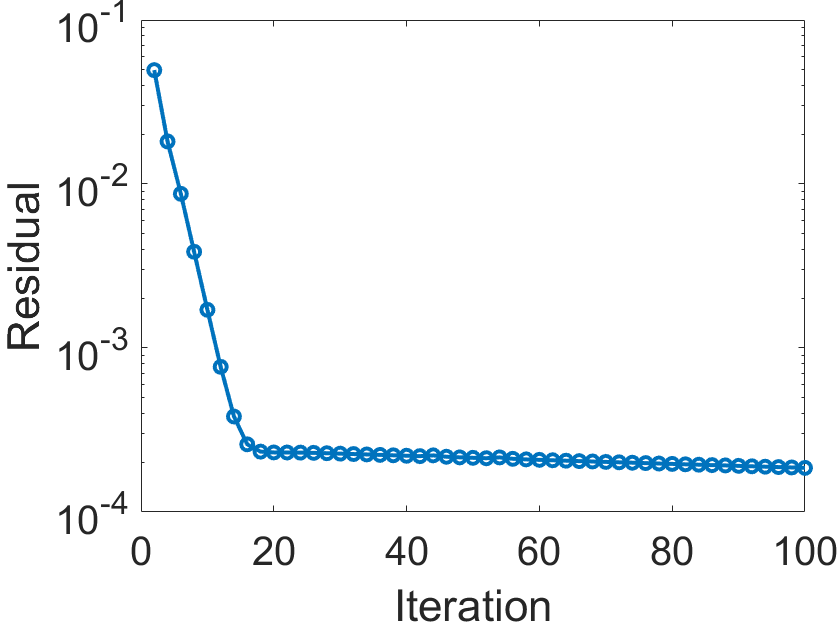}
   \centerline{(c). Residual on the VE} 
   \centerline{dataset of}
   \centerline{Experiment II.}
 \end{minipage}
 \hfill
 \begin{minipage}{0.24\linewidth}
   \includegraphics[width=\textwidth]{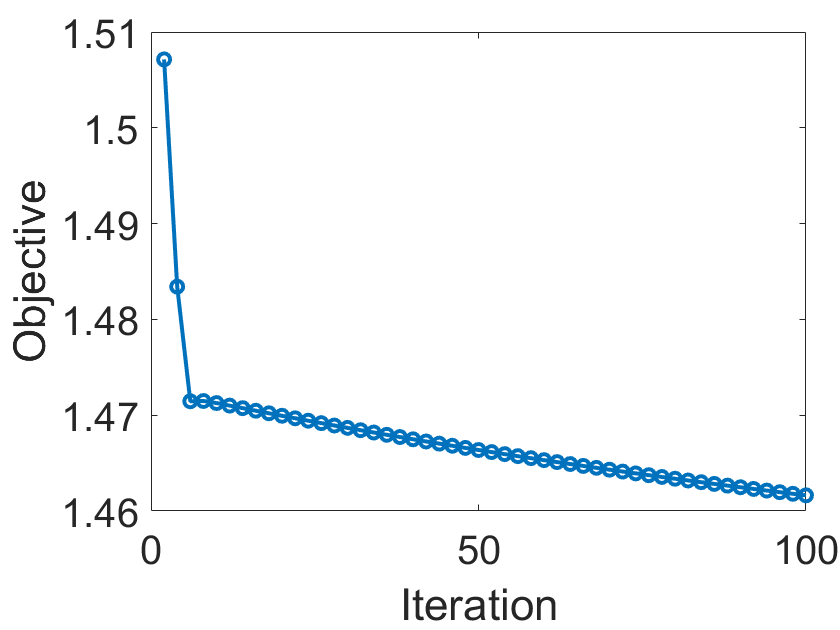}
   \centerline{(d). Objective on the VE}
   \centerline{dataset of}
   \centerline{Experiment II.}
 \end{minipage}
  \vspace{-0.3cm}
 \caption{Convergence curves on Experiments I and II.}
  \vspace{-1cm}
 \label{fig:convergences}
\end{figure*}

\subsection{Experiment I: Weak-constrained Multi-task Learning}
To evaluate the effectiveness of our method on the application of weak-constrained multi-task learning described in Equation \eqref{prob: multi-task learning}, a real-world school dataset is used to evaluate the effectiveness of our proposed ADMM. It consists of the examination scores in three years of 15,362 students
from 139 secondary schools, which are treated as tasks for examination
scores prediction based on 27 input features such as year of the examination, school-specific features, and student-specific features. The dataset is publicly available and the detailed description can be found in the original paper \cite{li2015multi}. $\rho$ was set to $1000$. Here we chose two kinds of $\lambda_1$: (1) $\lambda_1^k=10^5$; (2) $\lambda^{k+1}_1=\lambda^k_1+10$ with $\lambda_1^k=1$. $\lambda_1(1)$ and $\lambda_1(2)$ are the first and the second choice of $\lambda_1$, respectively.\\
\textbf{Metrics.} In this experiment, five metrics were utilized to evaluate model performance. Mean Squared Error (MSE) measures the average of the squares of the difference between observation and estimation. Different from MSE, Mean Squared Logarithmic Error (MSLE) measures the ratio of observation to estimation. Mean Absolute Error (MAE) is also an error measurement but computed in the absolute value. The less the above three metrics are, the better a regression model is. Explained Variance (EV) computes the ratio of the variance of the error to that of observation. The coefficient of determination or R2 score is the proportion of the variance in the dependent variable that is predictable from the independent variable. The higher score of EV and R2 are, the better a regression model is.\\
\textbf{Baselines.} To validate the effectiveness of the proposed ADMM, five benchmark multi-task learning models served as comparison methods. Loss functions were set to least square errors. The number of iterations was set to $5,000$. The regularization parameter $\alpha$ was set based on 5-fold cross-validation on the training set.\\
\indent 1. multi-task learning with Joint Feature Selection (JFS) \cite{argyriou2007multi,zhou2011malsar} . JFS is one of the most commonly used strategies in multi-task learning. It captures the relatedness of multiple tasks by a constraint of a weight matrix to share a common set of features. $\alpha$ was set to 100.  \\
\indent 2. Clustered Multi-Task Learning (CMTL) \cite{zhou2011clustered,zhou2011malsar}. CMTL assumes that multiple tasks are clustered into several groups. Tasks in the same group are similar to each other. $\alpha$ was set to 1.\\
\indent 3. multi-task Lasso (mtLasso) \cite{zhou2011malsar}. mtLasso extends the classic Lasso model to the multi-task learning setting. $\alpha$ was set to 10.\\
\indent 4. a convex relaxation of Alternating Structure Optimization (cASO) \cite{zhou2011malsar,ando2005framework}. cASO decomposes each task into two components: task-specific feature mapping and task-shared feature mapping. $\alpha$ was set to 0.01.\\
\indent 5. Block Coordinate Descent (BCD) \cite{xu2013block}. BCD is an intuitive method to solve multi-convex problems, which optimizes each variable alternately. $\alpha$ was set to 10.
\vspace{-0.5cm}
\begin{wraptable}{r}{0.5\linewidth}
\scriptsize
\centering
\vspace{-0.3cm}
\caption{Performance in Experiment I.}
\vspace{-0.3cm}
\begin{tabular}{c|c|c|c|c|c}
\hline\hline
\multicolumn{6}{c}{Mean}\\\hline
Method & MSE & MSLE&MAE &EV & R2 \\
\hline\hline
JFS&114.1052&    0.4531&    8.4349&    0.2948&    0.2948\\ \hline
CMTL&114.9892&     0.4647&     8.4756&     0.2876 &    0.2875
\\ \hline
mtLasso&115.3143&     0.4625&     8.4725&     0.2873&     0.2873

\\\hline
cASO&137.8336&     0.5204&     9.3450&     0.1606&     0.1605
\\\hline
BCD&149.2313&	0.5577&	9.8057&	0.1299&	0.0777\\\hline

 ADMM($\lambda_1(1)$)&113.6975&     \textbf{0.4423}&     8.4024&     0.2950&     0.2960\\\hline
 ADMM($\lambda_1(2)$)&\textbf{113.2400}&0.4428&\textbf{8.3943}&\textbf{0.3002}&\textbf{0.3002}
\\
    \hline\hline
    \multicolumn{6}{c}{Standard Deviation}\\\hline
Method & MSE & MSLE&MAE &EV & R2 \\
\hline\hline
JFS&2.02& 0.02&    0.06&    0.02&    0.02\\ \hline
CMTL&1.85&     0.02& 0.05&     \textbf{0.01} &    \textbf{0.01}\\ \hline
mtLasso&1.77&0.02&0.05&     \textbf{0.01}&\textbf{0.01}
\\\hline
cASO&7.26& 0.01& 0.06& \textbf{0.01}&     \textbf{0.01}\\\hline
BCD&1.41&	0.01&	0.06&	0.15&	\textbf{0.01}\\\hline

 ADMM($\lambda_1(1)$)&\textbf{0.83}&     \textbf{0.005}&     \textbf{0.03}&     \textbf{0.01}&     \textbf{0.01}\\\hline
 ADMM($\lambda_1(2)$)&0.95&     0.01&     0.04&     0.02&     0.02\\
    \hline\hline
\end{tabular}
\label{tab:multi-task performance}
\vspace{-1cm}
\end{wraptable}

\textbf{Performance.}
As discussed in Section \ref{sec:weak_multitask}, the convergence of our proposed ADMM is guaranteed based on our theoretical framework. To verify this, Figures \ref{fig:convergences}(a) and \ref{fig:convergences}(b) illustrate the residual and objective values in different iterations, which demonstrates the convergence of the proposed ADMM on this nonconvex problem. Then the performance of examination score prediction on this dataset is illustrated in Table \ref{tab:multi-task performance}. Table \ref{tab:multi-task performance} shows the mean and the standard deviation of all methods, which were repeated 10 times by initializing parameters randomly, to make experimental evaluation robust. It shows that $\lambda_1(2)$ outperforms $\lambda_1(1)$ in four out of five metrics for the proposed ADMM. In addition, the proposed ADMM achieves the best performance in all the metrics, compared to all comparison methods. Moreover, the standard deviation of the proposed ADMM is about $30\%$ smaller than any other comparison method. This is because our method only enforces that the sign of the feature weight across different tasks is the same, while comparison methods typically perform too aggressive assumptions on the similarity among tasks. For example, CMTL enforces that the correlated tasks need to have similar feature weights using squared regularization on the difference between feature weights. JFS and mtLasso still tend to enforce similar weights on features in different tasks by $\ell_{2,1}$ norm. Because their enforcement is weaker than CMTL, their performance is better. cASO gets relatively weak performance because it optimizes an approximation of a nonconvex problem, and thus the approximate solution may be distant from the true solution to the original problem. Finally, the BCD performs the worst among all methods, even though it shares the same formulation with our proposed ADMM. This reflects the advantage of our proposed ADMM algorithm: dual information in one iteration can be passed to its following iteration by dual variables, which yields better performance. \\

\begin{wrapfigure}{r}{0.4\linewidth}
                \vspace{-1.7cm}
\begin{center}
    \includegraphics[width=\linewidth]{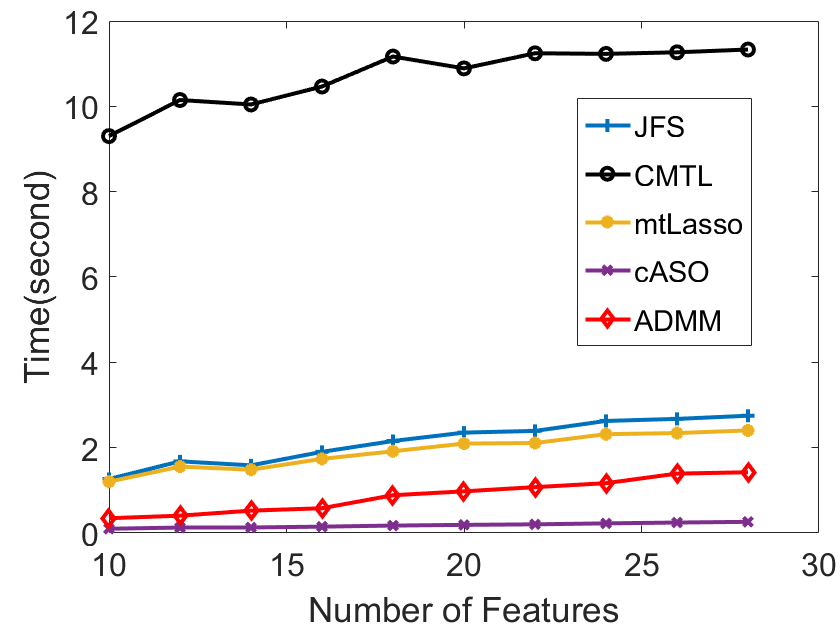}
 \caption{The training time of all methods in Experiment I.}
      \label{fig:scalability}
                \vspace{-1.cm}
          \end{center}
 \end{wrapfigure}
\textbf{Scalability.} To investigate the scalability of the proposed ADMM compared with all baselines in Experiment I, we measured the training time of them in the school dataset when the number of features varies. The training time was averaged by running 20 times. Figure \ref{fig:scalability} shows the training time of all methods when the number of features ranges from 10 to 28. The training time of all methods increased linearly concerning the number of features.  cASO was the most efficient of all methods, while the proposed ADMM was ranked second. mtLasso and JFS also trained a model within 5 seconds on average. CMTL was time-consuming for training, which spent more than 10 seconds. 
\vspace{-0.5cm}
\subsection{Experiment II: Event Forecasting with Multi-lingual Indicators}

\textbf{Datasets.}
To evaluate the performance of our proposed ADMM on the application in Section \ref{sec:network_constraints}, extensive experiments on nine real-world datasets have been performed. The dataset is obtained by randomly
sampling 10\% (by volume) of the Twitter data from Jan 2013 to Dec 2014. The data in the first and second years are used and training and test set, respectively. For the topic (i.e., social unrest) of interest, 1,806 keywords in the three major languages in Latin America, namely English, Spanish, and Portuguese, were provided by the paper \cite{zhao2018distant}. Their translation relationships have also been labeled as semantic links among them, such as ``protest'' in English, ``protesta'' in Spanish, and ``protesto'' in Portuguese. The event forecasting results were validated against a labeled event set, known as the Gold Standard Report (GSR), which is publicly available \cite{EN8FUW_2017}.\\
\textbf{Metric and Baselines.}
The metric used to evaluate the performance is Area Under the receiver operating characteristic Curve (AUC). Five comparison methods including the state-of-the-art Multi-Task learning (MTL), Multi-Resolution Event Forecasting (MREF), and Distant-supervision of Heterogeneous Multitask Learning (DHML) as well as classic methods Logistic Regression (LogReg) and Lasso. $\rho$ was set to $10$. Here we chose two kinds of $\lambda_2$: (1) $\lambda_2^k=10^5$; (2) $\lambda^{k+1}_2=\lambda^k_2+10$ with $\lambda_2^k=1$. $\lambda_2(1)$ and $\lambda_2(2)$ are the first and the second choice of $\lambda_2$, respectively. All the hyper-parameters were tuned by 5-fold cross-validation.\\ 
 \begin{wraptable}{r}{0.65\linewidth}
 \vspace{-1.3cm}
 \scriptsize
\centering
\caption{Event forecasting performance in AUC in each of the 9 datasets.}\label{tab:multi-lingual performance}
\vspace{-0.3cm}
\begin{tabular}{l|l|l|l|l|l|l|l|l|l}\hline\hline
&        BR & CL  & CO & EC & EL & MX & PY & UY & VE  \\\hline\hline
LogReg     & 0.686 & 0.677   & 0.644  & 0.599      & 0.618 & 0.661   & 0.616  & 0.628    & 0.667 \\
LASSO      & 0.685 & 0.677   & 0.648  & 0.603      & 0.636 & 0.665   & 0.615  & 0.666    & 0.669 \\
MTL        & 0.722 & 0.669   & 0.810  & 0.617      & 0.772 & 0.795   & 0.600  & 0.811    & 0.771 \\
MREF       & 0.714 & 0.563   & 0.515  & 0.784      & 0.612 & 0.693   & 0.658  & 0.681    & 0.588 \\
DHML      & 0.845       &  0.683             &  0.846          & 0.839      & 0.780 &  0.793 & 0.737  &  0.835  &  0.835\\
BCD &0.847&0.668&0.850&0.830&0.773&0.800&0.736&0.835&0.856\\
 ADMM $(\lambda_2(1))$&0.864&0.699&0.870&0.848&0.794&0.820&0.746&\textbf{0.850}&\textbf{0.867}\\ ADMM $(\lambda_2(2))$& 
 \textbf{0.867}&\textbf{0.701}&\textbf{0.872}&\textbf{0.851}&\textbf{0.798}&\textbf{0.823}&\textbf{0.747}&0.847&0.865\\
 \hline\hline
\end{tabular}
 \vspace{-0.7cm}
\end{wraptable}
\textbf{Performance.} As shown in Table \ref{tab:multi-lingual performance}, $\lambda_2(2)$ outperforms $\lambda_2(1)$ marginally in seven out of nine datasets for the proposed ADMM, and they generally perform the best among all the methods, with DHML and BCD the second-best performer. They all outperform the others typically by at least 5\%-10\%. This is because they leverage the multilingual correlation among the features to boost up the model's generalizability. Thanks to the framework of multi-task learning, MTL and MREF obtained a competitive performance with AUC typically over 0.7, which outperforms simple methods like LogReg and LASSO by 5\% on average.
\begin{wraptable}{r}{1.5\linewidth}
\vspace{-0.5cm}
\scriptsize
\centering
\begin{tabular}{l|l|l|l|l|l|l}\hline\hline
&LogReg&LASSO&MTL&MREF&DHML& ADMM\\\hline
BR&30193&1535&233&25889&332&\textbf{14}\\\hline CL&2981&242&35&6521&852&\textbf{11}\\\hline CO&8060&780&108&14714&87&\textbf{31}\\\hline EC&312&295&\textbf{17}&4332&46&25\\\hline EL&551&261&17&4669&33&\textbf{3}\\\hline MX&17712&2043&853&31349&175&\textbf{29}\\\hline PY&7297&527&40&9495&242&\textbf{5}\\\hline UY&748&336&20&5305&82&\textbf{3}\\\hline VE&5563&1008&49&5769&179&\textbf{28}\\\hline\hline
\end{tabular}
\centering
\vspace{-0.3cm}
\captionof{table}{Comparison of running time (in seconds) on 9 datasets in Experiment II.}
\label{tab:runtime}
\vspace{-0.9cm}
\end{wraptable}

\textbf{Efficiency.} In Experiment II, we also compared the training time of the proposed ADMM in comparison with all baselines on 9 datasets. The training time was averaged by running 5 times. The training time was shown in Table \ref{tab:runtime}. We do not show  BCD because its training time is similar to the proposed ADMM. Overall, the proposed ADMM was the most efficient of all methods for all datasets. It consumed no more than 30 seconds on all datasets. MTL  ranked second, but it spent hundreds of seconds on some datasets, like BR and MX. As the most time-consuming baselines, LogReg and MREF trained a model in thousands of seconds or more.
\vspace{-0.5cm}
\section{Conclusions}
\label{sec:conclusion}
\vspace{-0.3cm}
\indent We propose an ADMM framework for multi-convex problems with multiple coupled variables. It not only inherits the merits of general ADMMs but also provides advantageous theoretical properties on convergence conditions and properties under mild conditions. Besides, several machine learning applications of recent interest are discussed as special cases of our proposed ADMM. Extensive experiments have been conducted on ten real-world datasets, and demonstrate the effectiveness, scalability, and convergence properties of our proposed ADMM.
\vspace{-0.5cm}
\section*{Acknowledgement}
\vspace{-0.3cm}
This work was supported by the National Science Foundation (NSF) Grant No. 1755850, No. 1841520, No. 2007716, No. 2007976, No. 1942594, No. 1907805, a Jeffress Memorial Trust Award, Amazon Research Award, NVIDIA GPU Grant, and Design Knowledge Company (subcontract No: 10827.002.120.04).
\tiny
\bibliographystyle{plain}
\bibliography{reference}
\scriptsize
\onecolumn
\textbf{\large Appendix}
\begin{appendix}
\section{Preliminary Lemmas}
\label{sec:proofs}
\indent In this section, we give preliminary lemmas which are also used in the proof the proofs of Lemmas \ref{lemma:objective descent} and \ref{lemma:objective bound}. While Lemmas \ref{lemma: z optimal} and \ref{lemma: property 2 x} depend on the optimality conditions of subproblems, Lemmas \ref{lemma: lemma1}, \ref{lemma: property 2 y} and \ref{lemma:dual boundness} require Assumption \ref{ass:lipschitz differentiability and Coercivity}.
\begin{lemma}
\label{lemma: lemma1}
It holds that $\forall z_1,z_2\in R^q$,
\begin{align*}
    &h(z_1)\leq h(z_2)+\nabla h(z_2)^T(z_1-z_2)+(H/2)\Vert z_1-z_2\Vert^2, \ 
    -h(z_1)\leq -h(z_2)-\nabla h(z_2)^T(z_1-z_2)+(H/2)\Vert z_1-z_2\Vert^2.
\end{align*}
\end{lemma}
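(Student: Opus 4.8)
The plan is to recognize Lemma \ref{lemma: lemma1} as the standard \emph{descent lemma} (quadratic upper bound) for a function whose gradient is Lipschitz continuous, and to obtain the two displayed inequalities as a single argument applied once to $h$ and once to $-h$. The key observation is that the second inequality is nothing but the first inequality with $h$ replaced by $-h$: since $\nabla(-h) = -\nabla h$, the function $-h$ is again Lipschitz differentiable with the same constant $H$ by Assumption \ref{ass:assumption 2}. Hence it suffices to prove the first inequality.

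To prove the first inequality, I would fix arbitrary $z_1,z_2\in\mathbb{R}^q$ and introduce the one–dimensional auxiliary function $\phi(t)=h\big(z_2+t(z_1-z_2)\big)$ for $t\in[0,1]$, which is differentiable with $\phi'(t)=\nabla h\big(z_2+t(z_1-z_2)\big)^T(z_1-z_2)$. By the fundamental theorem of calculus $h(z_1)-h(z_2)=\int_0^1\phi'(t)\,dt$, so that
\begin{align*}
h(z_1)-h(z_2)-\nabla h(z_2)^T(z_1-z_2)
=\int_0^1\big(\nabla h(z_2+t(z_1-z_2))-\nabla h(z_2)\big)^T(z_1-z_2)\,dt .
\end{align*}
I would then bound the integrand with the Cauchy--Schwarz inequality and Assumption \ref{ass:assumption 2}, which gives $\Vert\nabla h(z_2+t(z_1-z_2))-\nabla h(z_2)\Vert\le Ht\Vert z_1-z_2\Vert$, and integrating $Ht\Vert z_1-z_2\Vert^2$ over $t\in[0,1]$ yields the bound $(H/2)\Vert z_1-z_2\Vert^2$. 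Applying the same chain of inequalities to $-h$ produces the second displayed inequality, completing the proof.

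There is no substantive obstacle here; the only points needing a little care are verifying that $\phi$ is differentiable and that the fundamental theorem of calculus applies (both follow from differentiability of $h$ on all of $\mathbb{R}^q$ together with continuity of $\nabla h$, which is implied by the Lipschitz bound), and keeping track of the sign of the linear term when transferring the argument from $h$ to $-h$. Note that convexity of $h$ is not actually used in this lemma; only Lipschitz differentiability is invoked.
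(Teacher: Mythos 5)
Your proposal is correct and follows essentially the same route as the paper: the paper also reduces both inequalities to the standard descent lemma applied to $h$ and to $-h$ (noting that $-h$ is Lipschitz differentiable with the same constant $H$), except that it simply cites Lemma 2.1 of Beck and Teboulle rather than writing out the integral argument you give. Your explicit derivation via the fundamental theorem of calculus is the standard proof of that cited lemma, so there is no substantive difference.
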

\begin{proof}
Because $h(z)$ is Lipschitz differentiable by Assumption \ref{ass:lipschitz differentiability and Coercivity}, so is $-h(z)$. Therefore, this lemma is proven exactly as same as Lemma 2.1 in \cite{beck2009fast}.
\end{proof}
\begin{lemma}
\label{lemma: z optimal}
It holds that
$y^k=\nabla h(z^k)$ for all $k\in \mathbb{N}$.
\end{lemma}
\begin{proof}
The optimality condition for the problem with regard to $z^{k}$ gives rise to
\begin{align*}
    \nabla h(z^k)-y^{k-1}-\rho (\sum\nolimits_{i=1}^n A_ix^k_i-z^k)=0.
\end{align*}
Because $y^k=y^{k-1}+\rho(\sum\nolimits A_ix_i^k-z^k)$, we have $y^k=\nabla h(z^k)$.
\end{proof}
\begin{lemma}
\label{lemma: property 2 x}
It holds that for $\forall k\in \mathbb{N}$,
\begin{align}
L_\rho(\cdots,x_{i-1}^{k+1},x_i^k,\cdots)-L_\rho(\cdots,x_i^{k+1},x_{i+1}^k,\cdots)\geq (\rho/2)\Vert A_i x_i^k-A_i x_i^{k+1}\Vert^2_2.\label{eq: lemma3}
\end{align}
\end{lemma}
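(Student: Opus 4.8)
The plan is to use the fact that $x_i^{k+1}$ is an \emph{exact} minimizer of $L_\rho$ in the $x_i$-block and that the corresponding one-block objective is not merely convex but $\rho$-strongly convex in the quantity $A_i x_i$, the surplus curvature coming entirely from the augmented-Lagrangian quadratic penalty. Concretely, write $\phi_i(x_i) := L_\rho(\cdots,x_{i-1}^{k+1},x_i,x_{i+1}^k,\cdots)$ and split it as $\phi_i(x_i) = \psi_i(x_i) + (\rho/2)\Vert A_i x_i - b\Vert^2_2$, where $b := z^k - \sum_{j<i}A_j x_j^{k+1} - \sum_{j>i}A_j x_j^k$ and $\psi_i(x_i) := f(\cdots,x_{i-1}^{k+1},x_i,x_{i+1}^k,\cdots) + g_i(x_i) + (y^k)^T A_i x_i + \mathbb{I}(l(\cdots,x_{i-1}^{k+1},x_i,x_{i+1}^k,\cdots))$. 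By the multi-convexity of $f$ and $l$ together with the convexity of $g_i$, $\psi_i$ is a proper convex function of $x_i$; the quadratic term is finite, smooth and convex, so the Moreau--Rockafellar sum rule gives $\partial\phi_i(x_i) = \partial\psi_i(x_i) + \rho A_i^T(A_i x_i - b)$. Since $x_i^{k+1}\in\arg\min_{x_i}\phi_i(x_i)$ (each subproblem is convex and solvable, as noted in Section~\ref{sec:algorithm}), there is some $s\in\partial\psi_i(x_i^{k+1})$ with $s + \rho A_i^T(A_i x_i^{k+1}-b) = 0$.

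Next I would combine two elementary facts evaluated at $x_i=x_i^k$. First, the subgradient inequality for the convex function $\psi_i$: $\psi_i(x_i^k) \ge \psi_i(x_i^{k+1}) + \langle s, x_i^k - x_i^{k+1}\rangle$. Second, the algebraic identity $\Vert A_i x_i^k - b\Vert^2_2 = \Vert A_i x_i^{k+1}-b\Vert^2_2 + 2\langle A_i x_i^{k+1}-b,\, A_i(x_i^k-x_i^{k+1})\rangle + \Vert A_i(x_i^k-x_i^{k+1})\Vert^2_2$. Multiplying the identity by $\rho/2$ and adding it to the subgradient inequality, the linear cross terms collapse into $\langle s + \rho A_i^T(A_i x_i^{k+1}-b),\, x_i^k - x_i^{k+1}\rangle$, which vanishes by the optimality condition above. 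What remains is $\phi_i(x_i^k) - \phi_i(x_i^{k+1}) \ge (\rho/2)\Vert A_i(x_i^k-x_i^{k+1})\Vert^2_2$. Recognizing $\phi_i(x_i^k) = L_\rho(\cdots,x_{i-1}^{k+1},x_i^k,x_{i+1}^k,\cdots)$ and $\phi_i(x_i^{k+1}) = L_\rho(\cdots,x_i^{k+1},x_{i+1}^k,\cdots)$ yields exactly \eqref{eq: lemma3}.

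The only place requiring care is the handling of the indicator term $\mathbb{I}(l(\cdots))$, which makes $\psi_i$ extended-real-valued: the sum rule and the existence of the minimizer are still legitimate because the quadratic is finite and continuous on all of $\mathbb{R}^{p_i}$ and, starting from a point with $l(x_1^0,\cdots,x_n^0)\le 0$, each $x_i$-subproblem has nonempty domain; moreover, if $l(\cdots,x_{i-1}^{k+1},x_i^k,x_{i+1}^k,\cdots)>0$ so that $\phi_i(x_i^k)=+\infty$, the claimed inequality is trivially true. I expect this feasibility bookkeeping to be the only mild obstacle; everything else is the standard ``one-block strongly convex descent'' computation and is routine.
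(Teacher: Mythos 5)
Your proposal is correct and follows essentially the same route as the paper's proof: the paper likewise isolates the convex non-quadratic part (here $f+\mathbb{I}(l(\cdot))+g_i$ plus the linear dual term), expands the penalty via the cosine rule $\Vert b+c\Vert^2-\Vert a+c\Vert^2=\Vert b-a\Vert^2+2(a+c)^T(b-a)$ (your quadratic identity), and kills the cross term with the subgradient inequality at the exact minimizer $x_i^{k+1}$. Your packaging as ``one-block strong convexity in $A_i x_i$'' and the explicit feasibility/finiteness remark for the indicator are just cleaner bookkeeping of the same argument.
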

\begin{proof}
\begin{align*}
&L_\rho(\cdots,x_{i-1}^{k+1},x_i^k,\cdots)-L_\rho(\cdots,x_i^{k+1},x_{i+1}^k,\cdots)\\&=f(\cdots,x_{i-1}^{k+1},x_i^k,\cdots)-f(\cdots,x_i^{k+1},x_{i+1}^k,\cdots)\\&+(y^k)^T(A_i x_i^k-A_i x_i^{k+1})+(\rho/2)\Vert \sum\nolimits_{j=1}^{i-1} A_j x_j^{k+1}+\sum\nolimits_{j=i}^n A_j x_j^{k}-z^k\Vert^2_2-(\rho/2)\Vert \sum\nolimits_{j=1}^{i} A_j x_j^{k+1}+\sum\nolimits_{j=i+1}^n A_j x_j^{k}-z^k\Vert^2_2\\
&=f(\cdots,x_{i-1}^{k+1},x_i^k,\cdots)-f(\cdots,x_i^{k+1},x_{i+1}^k,\cdots)\\&+(y^k)^T(A_i x_i^k-A_i x_i^{k+1})+(\rho/2)\Vert A_i x_i^k-A_i x_i^{k+1}\Vert^2_2+\rho(\sum\nolimits_{j=1}^{i}A_j x^{k+1}_j+\sum\nolimits_{j=i+1}^n A_jx_j^{k}-z^k)^T(A_ix_i^k-A_ix_i^{k+1})\\
&=f(\cdots,x_{i-1}^{k+1},x_i^k,\cdots)-f(\cdots,x_i^{k+1},x_{i+1}^k,\cdots)\\&+(A^T_iy^k+\rho A_i^T(\sum\nolimits_{j=1}^{i}A_j x^{k+1}_j+\sum\nolimits_{j=i+1}^n A_jx_j^{k}-z^k))^T(x_i^k-x_i^{k+1}) +(\rho/2)\Vert A_i x_i^k-A_i x_i^{k+1}\Vert^2_2.
\end{align*}
where the second equality follows from the cosine rule: $\Vert b+c\Vert^2-\Vert a+c\Vert^2=\Vert b-a\Vert^2+2(a+c)^T(b-a)$ with $a=A_ix_i^{k+1}$, $b=A_ix_i^k$ and $c=\sum\nolimits_{j=1}^{i-1} A_jx_j^{k+1}+\sum\nolimits_{j=i+1}^n A_jx_j^k-z^k$.\\
The optimality condition of $x^{k+1}_i$ leads to
\begin{align*}
    &0\in \partial_{x_i} L_\rho(\cdots,x^{k+1}_i,x^k_{i+1},\cdots)\\&=\partial_{x_i} f(\cdots,x^{k+1}_i,x^k_{i+1},\cdots)+A_i^Ty^k+\rho A_i^T(\sum\nolimits_{j=1}^{i}A_j x^{k+1}_j+\sum\nolimits_{j=i+1}^n A_jx_j^{k}-z^k)\\
    & -A_i^Ty^k-\rho A_i^T(\sum\nolimits_{j=1}^{i}A_j x^{k+1}_j+\sum\nolimits_{j=i+1}^n A_jx_j^{k}-z^k)\in \partial_{x_i} f(\cdots,x^{k+1}_i,x^k_{i+1},\cdots).
\end{align*}
  We have the following result according to the definition of subgradient
\begin{align*}
&f(\cdots,x_{i-1}^{k+1},x_i^k,\cdots)\\&\geq f(\cdots,x_{i}^{k+1},x_{i+1}^k,\cdots)+(-A_i^Ty^k-\rho A_i^T(\sum\nolimits_{j=1}^{i}A_j x^{k+1}_j+\sum\nolimits_{j=i+1}^n A_jx_j^{k}-z^k))^T(x_i^{k+1}-x_i^{k})\\&=f(\cdots,x_{i}^{k+1},x_{i+1}^k,\cdots)+(A_i^Ty^k+\rho A_i^T(\sum\nolimits_{j=1}^{i}A_j x^{k+1}_j+\sum\nolimits_{j=i+1}^n A_jx_j^{k}-z^k))^T(x_i^{k}-x_i^{k+1}).    
\end{align*}
Therefore, the lemma is proved.
\end{proof}
\begin{lemma}
\label{lemma: property 2 y}
If $\rho>2H$  so that $C_1=\rho/2-H/2-H^2/\rho>0$, then it holds that
\begin{align}
L_\rho(\cdots,x^{k+1}_n,z^k,y^k)-L_\rho(\cdots,x^{k+1}_n,z^{k+1},y^{k+1})\geq  C_1\Vert z^{k+1}-z^{k}\Vert^2_2. \label{eq: lemma4}  
\end{align}
\end{lemma}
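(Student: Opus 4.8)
The plan is to peel off the only part of $L_\rho(\cdots,x_n^{k+1},z,y)$ that actually moves during this step. Writing $b=\sum_{i=1}^n A_i x_i^{k+1}$ (held fixed throughout), the terms $f(\cdots,x_n^{k+1})$, $\mathbb{I}(l(\cdots,x_n^{k+1}))$ and $\sum_i g_i(x_i^{k+1})$ cancel in the difference $L_\rho(\cdots,x_n^{k+1},z^k,y^k)-L_\rho(\cdots,x_n^{k+1},z^{k+1},y^{k+1})$, and what remains is $[h(z^k)-h(z^{k+1})]+[(y^k)^T(b-z^k)-(y^{k+1})^T(b-z^{k+1})]+(\rho/2)[\|b-z^k\|_2^2-\|b-z^{k+1}\|_2^2]$. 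First I would rewrite everything in terms of the single increment $z^{k+1}-z^k$: substitute the dual update $y^{k+1}-y^k=\rho(b-z^{k+1})$ into the middle bracket, and apply the cosine rule $\|b-z^k\|_2^2-\|b-z^{k+1}\|_2^2=\|z^{k+1}-z^k\|_2^2+2(b-z^{k+1})^T(z^{k+1}-z^k)$ to the last bracket, again using $\rho(b-z^{k+1})=y^{k+1}-y^k$ for the cross term. After regrouping, the difference takes the form $[h(z^k)-h(z^{k+1})]+(y^k)^T(z^{k+1}-z^k)-(1/\rho)\|y^{k+1}-y^k\|_2^2+(y^{k+1}-y^k)^T(z^{k+1}-z^k)+(\rho/2)\|z^{k+1}-z^k\|_2^2$.

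Next I would control the two potentially negative pieces. For $h(z^k)-h(z^{k+1})$, apply the first inequality of Lemma~\ref{lemma: lemma1} with $z_1=z^{k+1}$, $z_2=z^k$, giving $h(z^k)-h(z^{k+1})\geq-\nabla h(z^k)^T(z^{k+1}-z^k)-(H/2)\|z^{k+1}-z^k\|_2^2$; by Lemma~\ref{lemma: lemma3}, $\nabla h(z^k)=y^k$, so the $(y^k)^T(z^{k+1}-z^k)$ term cancels exactly and we are left with the stray cross term $(y^{k+1}-y^k)^T(z^{k+1}-z^k)$, which equals $(\nabla h(z^{k+1})-\nabla h(z^k))^T(z^{k+1}-z^k)\geq 0$ by convexity of $h$ (monotonicity of its gradient), hence can be discarded. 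For $(1/\rho)\|y^{k+1}-y^k\|_2^2$, use Lemma~\ref{lemma: lemma3} once more to write $y^{k+1}-y^k=\nabla h(z^{k+1})-\nabla h(z^k)$ and then Assumption~\ref{ass:assumption 2} (Lipschitz differentiability of $h$ with constant $H$) to get $\|y^{k+1}-y^k\|_2\leq H\|z^{k+1}-z^k\|_2$. Collecting the coefficients of $\|z^{k+1}-z^k\|_2^2$ then yields $\rho/2-H/2-H^2/\rho=C_1$, which is positive by the hypothesis $\rho>2H$, completing the proof.

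The main obstacle is the bookkeeping around the dual variable: the raw difference of Lagrangians carries terms linear in $z^{k+1}-z^k$ weighted by both $y^k$ and $y^{k+1}$, plus the uncontrollable quantity $\|y^{k+1}-y^k\|_2$, and none of this closes on its own. The estimate only works because Lemma~\ref{lemma: lemma3} pins both dual iterates to gradients of $h$ at the corresponding $z$-iterates — this is precisely what makes the linear terms cancel, what forces the residual cross term to be nonnegative (convexity of $h$), and what converts $\|y^{k+1}-y^k\|_2$ into a multiple of the primal increment $\|z^{k+1}-z^k\|_2$. Everything else is a routine application of the cosine rule and the descent-type inequality of Lemma~\ref{lemma: lemma1}.
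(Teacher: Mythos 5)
Your proof is correct, and it follows the same skeleton as the paper's: isolate the $z$- and $y$-dependent terms, use the dual update and the cosine rule to express the difference in terms of $z^{k+1}-z^k$ and $y^{k+1}-y^k$, and then invoke Lemma~\ref{lemma: lemma3} together with the $H$-Lipschitz bound to convert $\Vert y^{k+1}-y^k\Vert$ into $H\Vert z^{k+1}-z^k\Vert$, yielding exactly $C_1=\rho/2-H/2-H^2/\rho$. The one genuine divergence is how the linear terms are dispatched: the paper merges $(y^k)^T(z^{k+1}-z^k)$ and $(y^{k+1}-y^k)^T(z^{k+1}-z^k)$ into $\nabla h(z^{k+1})^T(z^{k+1}-z^k)$ via Lemma~\ref{lemma: lemma3} and then applies the second (the $-h$) inequality of Lemma~\ref{lemma: lemma1}, paying only the $(H/2)\Vert z^{k+1}-z^k\Vert^2_2$ price; you instead apply the first inequality of Lemma~\ref{lemma: lemma1} at $z^k$, cancel $(y^k)^T(z^{k+1}-z^k)$ with $\nabla h(z^k)=y^k$, and discard the leftover cross term $(\nabla h(z^{k+1})-\nabla h(z^k))^T(z^{k+1}-z^k)\ge 0$ by monotonicity of $\nabla h$. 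Your extra step is legitimate here since Problem 1 assumes $h$ convex, and keeping the nonnegative cross term would even give a marginally sharper bound; the paper's variant is slightly more economical in hypotheses, as it needs only Lipschitz differentiability of $h$ (which is precisely why Lemma~\ref{lemma: lemma1} records the bound for $-h$ as well) and would survive without convexity of $h$. Both routes produce the same constant $C_1$ under $\rho>2H$.
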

\begin{proof}
\begin{align*}
    &L_\rho(x_1^{k+1},\cdots,x^{k+1}_n,z^k,y^k)-L_\rho(x_1^{k+1},\cdots,x^{k+1}_n,z^{k+1},y^{k+1})\\&=h(z^k)+(y^{k})^T (\sum\nolimits_{i=1}^{n} A_i x_i^{k+1}-z^{k})+(\rho/2)\Vert \sum\nolimits_{i=1}^{n} A_i x_i^{k+1}-z^k\Vert^2_2\\&-h(z^{k+1})-(y^{k+1})^T (\sum\nolimits_{i=1}^{n} A_i x_i^{k+1}-z^{k+1})-(\rho/2)\Vert \sum\nolimits_{i=1}^{n} A_i x_i^{k+1}-z^{k+1}\Vert^2_2\\&=h(z^k)-h(z^{k+1})+(y^k-y^{k+1})^T\sum\nolimits_{i=1}^{n} A_i x_i^{k+1}+(y^{k+1})^T z^{k+1}-(y^{k})^Tz^{k}+(\rho/2)\Vert \sum\nolimits_{i=1}^{n} A_i x_i^{k+1}-z^k\Vert^2_2\\&-(\rho/2)\Vert \sum\nolimits_{i=1}^{n} A_i x_i^{k+1}-z^{k+1}\Vert^2_2 \\&=h(z^k)-h(z^{k+1})+(y^k-y^{k+1})^T\sum\nolimits_{i=1}^{n} A_i x_i^{k+1}+(y^{k+1})^T z^{k+1}-(y^{k})^Tz^{k}\\&+(\rho/2)\Vert  z^{k+1}-z^k\Vert^2_2+\rho(z^{k+1}-\sum\nolimits_{i=1}^{n}A_ix^{k+1}_i)^T(z^k-z^{k+1})\\&\ \ \ \text{(cosine rule $\Vert a-b\Vert ^2-\Vert a-c\Vert^2=\Vert c-b\Vert ^2+2( c-a) ^T(b-c) $ where $a=\sum\nolimits_{i=1}^{n}A_i x_i^{k+1}$, $b=z^k$ and $c=z^{k+1}$.)}\\&=h(z^k)-h(z^{k+1})+(y^k-y^{k+1})^T\sum\nolimits_{i=1}^{n} A_i x_i^{k+1}+(y^{k+1})^T z^{k+1}-(y^{k})^Tz^{k}+(\rho/2)\Vert  z^{k+1}-z^k\Vert^2_2+(y^k-y^{k+1})^T(z^k-z^{k+1})\\&\ \ \ \text{(Because $y^{k+1}=y^k+\rho(\sum\nolimits_{i=1}^n A_ix^{k+1}_i-z^{k+1})$.)}\\&=h(z^k)-h(z^{k+1})+(y^k-y^{k+1})^T(\sum\nolimits_{i=1}^{n} A_i x_i^{k+1}+z^k-z^{k+1})+(y^{k+1})^T z^{k+1}-(y^{k})^Tz^{k}+(\rho/2)\Vert  z^{k+1}-z^k\Vert^2_2\\&=h(z^k)-h(z^{k+1})+(y^k-y^{k+1})^T(\sum\nolimits_{i=1}^{n} A_i x_i^{k+1}-z^{k+1})+(y^k-y^{k+1})^Tz^k+(y^{k+1})^T z^{k+1}-(y^{k})^Tz^{k}+(\rho/2)\Vert  z^{k+1}-z^k\Vert^2_2\\&=h(z^k)-h(z^{k+1})-(1/\rho)(y^k-y^{k+1})^T(y^k-y^{k+1})-(y^{k+1})^T(z^k-z^{k+1})+(\rho/2)\Vert  z^{k+1}-z^k\Vert^2_2\\&\ \ \ \text{(Because $y^{k+1}=y^k+\rho(\sum\nolimits_{i=1}^n A_ix^{k+1}_i-z^{k+1})$.)}\\&=h(z^k)-h(z^{k+1})-(y^{k+1})^T(z^{k}-z^{k+1})+(\rho/2)\Vert z^{k+1}-z^k\Vert^2_2-(1/\rho)\Vert y^{k+1}-y^k\Vert^2_2\\&=h(z^k)-h(z^{k+1})+\nabla h(z^{k+1})^T(z^{k+1}-z^{k})+(\rho/2)\Vert z^{k+1}-z^k\Vert^2_2-(1/\rho)\Vert y^{k+1}-y^k\Vert^2_2\ \ \ \text{(Lemma \ref{lemma: z optimal})}\\&\geq (-H/2)\Vert z^{k+1}-z^k\Vert^2_2+(\rho/2)\Vert z^{k+1}-z^k\Vert^2_2-(1/\rho)\Vert \nabla h(z^{k+1})-\nabla h(z^k)\Vert^2_2\\& \text{($-\nabla h(z)$ is Lipschitz differentiable, Lemma \ref{lemma: lemma1} and Lemma \ref{lemma: z optimal})}\\&\geq (-H/2)\Vert z^{k+1}-z^k\Vert^2_2+(\rho/2)\Vert z^{k+1}-z^k\Vert^2_2-(H^2/\rho)\Vert z^{k+1}-z^k\Vert^2_2\ \ \ \text{(Assumption \ref{ass:lipschitz differentiability and Coercivity})}\\&=C_1\Vert z^{k+1}-z^k\Vert^2_2.
\end{align*}
We choose $\rho>2H$ to make $C_1>0$.
\end{proof}
\begin{lemma}
\label{lemma:dual boundness}
 $ \forall k \in \mathbb{N}$, we have
 $
     \Vert y^{k+1}-y^k\Vert\leq H \Vert z^{k+1}-z^k\Vert
$.
\end{lemma}
\begin{proof}
\begin{align*}
   & \Vert y^{k+1}-y^k\Vert=\Vert \nabla h(z^{k+1})-\nabla h(z^{k})\Vert\ \ \ \text{(Lemma \ref{lemma: z optimal})}\leq  H\Vert z^{k+1}-z^k\Vert \ \ \ \text{(Assumption \ref{ass:lipschitz differentiability and Coercivity})}.
\end{align*}
\end{proof}
\section{Proofs of Lemmas \ref{lemma:objective descent}- \ref{lemma:objective bound}}
\label{sec: proof of objective descent and objective bound}
\begin{proof}[Proof of Lemma \ref{lemma:objective descent}]
This follows directly from Lemmas \ref{lemma: property 2 x} and \ref{lemma: property 2 y}.
\end{proof}
\begin{proof}[Proof of Lemma \ref{lemma:objective bound}]
There exists $z'$ such that $\sum\nolimits_{i=1}^n A_ix_i^{k}-z'=0$. Therefore, we have
\begin{align*}
    &f(x_1^k,\cdots,x_n^k)+h(z')\geq \min S > -\infty.
\end{align*}
where $S=\{ f(x_1,\cdots,x_n)+h(z):  \sum\nolimits_{i=1}^n A_i x_i-z=0\}$, which is the objective value of Problem \ref{prob:main problem}, and therefore bounded from below. Then we have
\begin{align*}
    &L_\rho(x_1^k,\cdots,x_n^k,z^k,y^k)\\&= f(x^k_1,\cdots,x^k_n)+h(z^k)+(y^k)^T(\sum\nolimits_{i=1}^n A_ix^k_i-z^k)+(\rho/2)\Vert \sum\nolimits_{i=1}^n A_ix^k_i-z^k\Vert^2\\&=f(x^k_1,\cdots,x^k_n)+h(z^k)+(y^k)^T(z^{'}-z^{k})+(\rho/2)\Vert \sum\nolimits_{i=1}^n A_ix^k_i-z^k\Vert^2 \  \text{$(\sum\nolimits_{i=1}^n A_ix_i^k-z^{'}=0)$}\\
    &=f(x^k_1,\cdots,x^k_n)+h(z^k)+(\nabla h(z^k))^T(z'-z^k)+(\rho/2)\Vert \sum\nolimits_{i=1}^n A_ix^k_i-z^k\Vert^2 \ \ \ \ \text{(Lemma \ref{lemma: z optimal})}\\&\geq f(x^k_1,\cdots,x^k_n)+h(z')+(\rho-H)/2\Vert \sum\nolimits_{i=1}^n A_ix^k_i-z^k\Vert^2_2  \ \ \ \ \text{(Lemmas  \ref{lemma: lemma1} and \ref{lemma: z optimal} ,$h(z)$ is Lipschitz differentiable)}\\&\geq \min S+(\rho-H)/2\Vert \sum\nolimits_{i=1}^n A_ix^k_i-z^k\Vert^2_2\geq\min S>-\infty. 
\end{align*}
Therefore, $L_\rho(x_1^k,\cdots,x_n^k,z^k,y^k)$ is  bounded from below.
\end{proof}

\section{Proofs of Theorems \ref{thero: theorem 2}-\ref{thero: theorem 3}}
\label{sec:convergence proof}
\begin{proof}[Proof of Theorem \ref{thero: theorem 2}] We show  residual convergence and objective convergence  based on Lemmas \ref{lemma:objective descent} and \ref{lemma:objective bound}. \\
From Lemma \ref{lemma:objective descent}, $L_\rho(x^k_1,\cdots,x^k_n,z^k,y^k)$ decreases monotonically, and $L_\rho(x^k_1,\cdots,x^k_n,z^k,y^k)$  is lower bounded by Lemma \ref{lemma:objective bound}. Therefore, $L_\rho(x^k_1,\cdots,x^k_n,z^k,y^k)$ is convergent because a monotone bounded sequence converges (Monotone Convergence Theorem). According to the continuity of $L_\rho$, we take $k\rightarrow\infty$ on the both sides of Inequality \eqref{ineq: objective descent} to obtain
\begin{align*}
    &\lim_{k\rightarrow \infty}(L_\rho(x_1^k,\cdots,x_n^k,z^k,y^k)-L_\rho(x_1^{k+1},\cdots,x_n^{k+1},z^{k+1},y^{k+1}))\\&\geq \lim_{k\rightarrow \infty} C_2(\Vert z^{k+1}-z^k\Vert^2_2+\sum\nolimits_{i=1}^n\Vert A_i(x_i^{k+1}-x_i^{k})\Vert^2_2).
\end{align*}
On one hand,  $L_\rho(x_1,\cdots,x_n,z,y)$ is convergent, so we have
\begin{align*}\lim_{k\rightarrow \infty} C_2(\Vert z^{k+1}-z^k\Vert^2_2+\sum\nolimits_{i=1}^n\Vert A_i(x_i^{k+1}-x_i^{k})\Vert^2_2)\leq 0.
\end{align*}
On the other hand, $C_2(\Vert z^{k+1}-z^k\Vert^2_2+\sum\nolimits_{i=1}^n\Vert A_i(x_i^{k+1}-x_i^{k})\Vert^2_2$ is nonnegative, so we get
\begin{align*}
    \lim_{k\rightarrow \infty} C_2(\Vert z^{k+1}-z^k\Vert^2_2+\sum\nolimits_{i=1}^n\Vert A_i(x_i^{k+1}-x_i^{k})\Vert^2_2)=0.
\end{align*}
This suggests that $ \lim_{k\rightarrow \infty}  (z^{k+1}-z^k)=0$ and $ \lim_{k\rightarrow \infty}  A_i(x_i^{k+1}-x_i^{k})=0(i=1,\cdots,n)$. Moreover, by Lemma \ref{lemma:dual boundness}, $\lim_{k\rightarrow \infty} \Vert y^{k+1}-y^k\Vert\leq H\lim_{k\rightarrow \infty} \Vert z^{k+1}-z^k\Vert=0$. So we have $\lim_{k\rightarrow \infty} (y^{k+1}-y^k)=0$.\\
\indent a). For residual convergence, by the Line 8 of Algorithm \ref{algo:proposed ADMM}, we have
\begin{align*}
    &\lim\nolimits_{k\rightarrow\infty}r^{k}=\lim\nolimits_{k\rightarrow\infty}(y^{k}-y^{k-1})/\rho=0.
\end{align*}
\indent b). For objective convergence, since 
\begin{align*}
    L_\rho(x^k_1,\cdots,x^k_n,z^k,y^k)&=F(x^k_1,\cdots,x^k_n,z^k,y^k)+(y^k)^T r^k+(\rho/2)\Vert r^k\Vert^2_2
\end{align*}
and $L_\rho(x^k_1,\cdots,x^k_n,z^k,y^k)$ is convergent, $r^k$ converges to 0 and $y^k$ is bounded, then $F(x^k_1,\cdots,x^k_n,z^k,y^k)$ is also convergent.
\end{proof}
\begin{proof}[Proof of Theorem \ref{thero: Nash point theorem}]
\indent Obviously $\lim_{k\rightarrow\infty}(z^{k+1}-z^{k})=0$ and $\lim_{k\rightarrow\infty}(y^{k+1}-y^{k})=0$ from the proof of Theorem \ref{thero: theorem 2}. In order to prove this theorem, we firstly prove that $\lim_{k\rightarrow\infty} (x^{k+1}_i-x^{k}_i)=0(i=1,\cdots,n)$ if either of two assumptions holds, then prove that any limit point $(x^*_1,\cdots,x^*_n,z^*)$ is a feasible Nash point of Problem 1. \\
\indent (a). Suppose $A_i(i=1,\cdots,n)$ have full rank. Because $\lim_{k\rightarrow\infty} A_i(x^{k+1}_i-x^{k}_i)=0$ from the proof of Theorem \ref{thero: theorem 2}, then obviously $\lim_{k\rightarrow\infty} (x^{k+1}_i-x^{k}_i)=0$ \cite{lin2015sublinear}.\\
\indent (b). Suppose  $F$ is strongly convex with regard to $x_i$. Because  $L_\rho(x_1,\cdots,x_n,z,y)=F(x_1,\cdots,x_n,z)+y^T(\sum\nolimits A_i x_i-z)+(\rho/2)\Vert \sum\nolimits A_i x_i-z \Vert^2_2$, $F(x_1,\cdots,x_n,z)$, and $y^T(\sum\nolimits A_i x_i-z)+(\rho/2)\Vert \sum\nolimits A_i x_i-z \Vert^2_2$ are strongly convex,  $L_\rho$ is also strongly convex regard to $x_i$ \cite{merentes2010remarks} with the assumed constant $D_i>0$. We have
\begin{align*}
    L_\rho(x^{k+1}_1,\cdots,x^{k+1}_{i-1},x^k_i,x^{k}_{i+1},\cdots, x^k_n,z^k,y^k)&\geq L_\rho(x^{k+1}_1,\cdots,x^{k+1}_{i-1},x^{k+1}_i,x^{k}_{i+1},\cdots,x^k_n,z^k,y^k)+(v_i^{k+1})^T(x_i^{k}-x_i^{k+1})\\&+(D_i/2)\Vert x_i^{k+1}-x_i^{k} \Vert^2_2
\end{align*}
where $ \forall v_i^{k+1}\in \partial_{x_i} L_\rho(x^{k+1}_1,\cdots,x^{k+1}_{i-1},x^{k+1}_i,x^{k}_{i+1},\cdots,x^k_n,z^k,y^k)$. The optimality condition of $x^{k+1}_i$ leads to \\$0 \in \partial_{x_i} L_\rho(x^{k+1}_1,\cdots,x^{k+1}_{i-1},x^{k+1}_i,x^{k}_{i+1},\cdots,x^k_n,z^k,y^k)$. Therefore, we have
\begin{align}
    &L_\rho(x^{k+1}_1,\cdots,x^{k+1}_{i-1},x^k_i,x^k_{i+1},\cdots, x^k_n,z^k,y^k)\geq L_\rho(x^{k+1}_1,\cdots,x^{k+1}_{i-1},x^{k+1}_i,x^{k}_{i+1},\cdots,x^k_n,z^k,y^k)+(D_i/2)\Vert x_i^{k+1}-x_i^{k} \Vert^2_2 \label{ineq: strong convexity}
\end{align}
We sum up Inequality \eqref{ineq: strong convexity} from $i=1,\cdots,n$ and Inequality \eqref{eq: lemma4}  to obtain
\begin{align}
 L_\rho(x^{k}_1,\cdots, x^k_n,z^k,y^k) -  L_\rho(x^{k+1}_1,\cdots, x^{k+1}_n,z^{k+1},y^{k+1})\geq \sum\nolimits_{i=1}^n (D_i/2)\Vert x_i^{k+1}-x_i^{k} \Vert^2_2+C_1\Vert z^{k+1}-z^k\Vert^2_2 \label{ineq: strong convexity descent}
\end{align}
where $C_1>0$ by Lemma \ref{lemma: property 2 y} if $\rho>2H$. According to the continuity of $L_\rho$, we take $k\rightarrow\infty$ on the both sides of Inequality \eqref{ineq: strong convexity descent} to obtain
\begin{align*}
    &\lim_{k\rightarrow \infty}(L_\rho(x_1^k,\cdots,x_n^k,z^k,y^k)-L_\rho(x_1^{k+1},\cdots,x_n^{k+1},z^{k+1},y^{k+1}))\geq \lim_{k\rightarrow \infty} (\sum\nolimits_{i=1}^n (D_i/2)\Vert x_i^{k+1}-x_i^{k} \Vert^2_2+C_1\Vert z^{k+1}-z^k\Vert^2_2)
\end{align*}
On one hand,  $L_\rho(x_1,\cdots,x_n,z,y)$ is convergent, so we have
\begin{align*}\lim_{k\rightarrow \infty} (\sum\nolimits_{i=1}^n (D_i/2)\Vert x_i^{k+1}-x_i^{k} \Vert^2_2+C_1\Vert z^{k+1}-z^k\Vert^2_2)\leq 0
\end{align*}
On the other hand, $\sum\nolimits_{i=1}^n (D_i/2)\Vert x_i^{k+1}-x_i^{k} \Vert^2_2+C_1\Vert z^{k+1}-z^k\Vert^2_2$ is nonnegative, so we get
\begin{align*}
    \lim_{k\rightarrow \infty} (\sum\nolimits_{i=1}^n (D_i/2)\Vert x_i^{k+1}-x_i^{k} \Vert^2_2+C_1\Vert z^{k+1}-z^k\Vert^2_2)=0
\end{align*}
This suggests that $ \lim_{k\rightarrow \infty}  (x_i^{k+1}-x_i^{k})=0(i=1,\cdots,n)$ and $ \lim_{k\rightarrow \infty}  (z^{k+1}-z^k)=0$. \\
\indent Therefore,  $ \lim_{k\rightarrow \infty}  (x_i^{k+1}-x_i^{k})=0(i=1,\cdots,n)$ if either of two assumptions holds.
Because $(x^k_1,\cdots, x^k_n, z^k,y^k)$ is bounded, there exists a subsequence  $(x_1^s,\cdots,x_n^s,z^s,y^s)$ such that $(x_1^s,\cdots,x_n^s,z^s,y^s)\rightarrow (x_1^*,\cdots,x_n^*,z^*,y^*)$ where $(x_1^*,\cdots,x_n^*,z^*,y^*)$ is a limit point. Because $\lim_{s\rightarrow \infty}  (x_i^{s+1}-x_i^{s})=0(i=1,\cdots,n)$, $ \lim_{s\rightarrow \infty}  (z^{s+1}-z^s)=0$ and $ \lim_{s\rightarrow \infty}  (y^{s+1}-y^s)=0$, we have $(x_1^{s+1},\cdots,x_n^{s+1},z^{s+1},y^{s+1})\rightarrow (x_1^*,\cdots,x_n^*,z^*,y^*)$. Now we prove that the limit point $(x_1^*,\cdots,x_n^*,z^*)$ is a feasible Nash point of Problem 1.\\
\indent For feasibility, since  $\lim_{k\rightarrow\infty} r^k=\lim_{k\rightarrow\infty} \sum\nolimits_{i=1}^n A_ix^k_i-z^k=0$, so for the subsequence $(x_1^s,\cdots,x_n^s,z^s,y^s)\rightarrow (x_1^*,\cdots,x_n^*,z^*,y^*)$, we have $\lim_{s\rightarrow\infty} r^s=\lim_{s\rightarrow\infty} (\sum\nolimits_{i=1}^n A_ix^s_i-z^s)=0$ then $\sum\nolimits_{i=1}^n A_ix^*_i-z^*=0$.
\\\indent For the Nash point, we obtain the following according to the optimality conditions of $x^{s+1}_i(i=1,\cdots,n)$ and $z^{s+1}$ in Equations \eqref{eq:update x} and \eqref{eq:update z}, respectively.
\begin{align*}
    &L_\rho(x^{s+1}_1,\cdots,x^{s+1}_{i-1},x^{s+1}_{i},x^{s}_{i+1},\cdots,x^{s}_{n},z^s,y^s)\leq L_\rho(x^{s+1}_1,\cdots,x^{s+1}_{i-1},x_{i},x^{s}_{i+1},\cdots,x^{s}_{n},z^s,y^s), \\& \forall (x^{s+1}_1,\cdots,x^{s+1}_{i-1},x_{i},x^{s}_{i+1},\cdots,x^{s}_{n},z^s) \in dom(F)\\
&L_\rho(x^{s+1}_1,\cdots,x^{s+1}_{n},z^{s+1},y^s)\leq L_\rho(x^{s+1}_1,\cdots,x^{s+1}_{n},z,y^s), \ \forall (x^{s+1}_1,\cdots,x^{s+1}_{n},z)\in dom(F)
\end{align*}
According to the continuity of $L_\rho$, we take $s \rightarrow\infty$ on the both sides of two inequalities. Because $(x_1^{s},\cdots,x_n^{s},z^{s},y^s)\rightarrow (x_1^*,\cdots,x_n^*,z^*,y^*)$ and $(x_1^{s+1},\cdots,x_n^{s+1},z^{s+1},y^{s+1})\rightarrow (x_1^*,\cdots,x_n^*,z^*,y^*)$, we have
\begin{align*}
    &L_\rho(x^*_1,\cdots,x^*_{n},z^*,y^*)\leq L_\rho(x^*_1,\cdots,x^*_{i-1},x_{i},x^*_{i+1},\cdots,x^*_{n},z^*,y^*), \ \forall (x^*_1,\cdots,x^*_{i-1},x_{i},x^*_{i+1},\cdots,x^*_{n},z^*) \in dom(F)\\
&L_\rho(x^*_1,\cdots,x^*_{n},z,y^*)\leq L_\rho(x^*_1,\cdots,x^*_{n},z^*,y^*), \ \forall (x^*_1,\cdots,x^*_{n},z)\in dom(F)
\end{align*}
Here $\forall (x^*_1,\cdots,x^*_{i-1},x_{i},x^*_{i+1},\cdots,x^*_{n},z^*) \in dom(F)$ and $\forall (x^*_1,\cdots,x^*_{n},z)\in dom(F)$ mean  $\forall x_i \ s.t. \ \sum\nolimits_{j=1,j\neq i}^n A_jx^*_j+A_ix_i-z^*=0$ and $\forall z \ s.t. \ \sum\nolimits_{j=1}^n A_jx^*_j-z=0$, respectively. Using the fact that $(x^*_1,\cdots,x^*_n,z^*)$ is feasible in Problem 1, we obtain $L_\rho(x^*_1,\cdots,z^*,y^*)=F(x^*_1,\cdots,z^*)$, $L_\rho(x^*_1,\cdots,x^*_{i-1},x_{i},x^*_{i+1},\cdots,x^*_{n},z^*,y^*)=F(x^*_1,\cdots,x^*_{i-1},x_{i},x^*_{i+1},\cdots,x^*_{n},z^*)$ and $L_\rho(x^*_1,\cdots,x^*_{n},z,y^*)=F(x^*_1,\cdots,x^*_{n},z)$. Therefore, we prove that $(x^*_1,\cdots,x^*_n,z^*)$ is a feasible Nash point of $F$ defined in Problem 1.
\end{proof} 

\begin{proof}[Proof of Theorem \ref{thero: theorem 3}]
To prove this theorem, we will first show that $u_k$ satisfies two conditions: (1). $u_k\geq u_{k+1}$. (2). $\sum\nolimits_{k=0}^\infty u_k$ is bounded.  We then conclude the convergence rate of $o(1/k)$ based on these two conditions. Specifically, first, we have
\begin{align*}
    u_k&=\min\nolimits_{0\leq l\leq k}(\Vert z^{l+1}-z^l\Vert^2_2+\sum\nolimits_{i=1}^n\Vert A_i(x_i^{l+1}-x_i^{l})\Vert^2_2)\\&\geq \min\nolimits_{0\leq l\leq k+1}(\Vert z^{l+1}-z^l\Vert^2_2+\sum\nolimits_{i=1}^n\Vert A_i(x_i^{l+1}-x_i^{l})\Vert^2_2)\\&=u_{k+1}
\end{align*}
Therefore $u_k$ satisfies the first condition. Second,
\begin{align*}
    &\sum\nolimits_{k=0}^\infty u_k\\&=\sum\nolimits_{k=0}^\infty \min\nolimits_{0\leq l\leq k}(\Vert z^{l+1}-z^l\Vert^2_2+\sum\nolimits_{i=1}^n\Vert A_i(x_i^{l+1}-x_i^{l})\Vert^2_2)\\&\leq \sum\nolimits_{k=0}^\infty (\Vert z^{k+1}-z^k\Vert^2_2+\sum\nolimits_{i=1}^n\Vert A_i(x_i^{k+1}-x_i^{k})\Vert^2_2)\\&\leq (L_\rho(x^0_1,\cdots,x^0_n,z^0,y^0)-L^*_\rho)/C_2 \ \text{(Lemma \ref{lemma:objective descent})}
\end{align*}
where $L^*_\rho=\lim_{k\rightarrow \infty}L_\rho(x^k_1,\cdots,x^k_n,z^k,y^k)$.
So $\sum\nolimits_{k=0}^\infty u_k$ is bounded and $u_{k}$ satisfies the second condition. Finally, it has been proved that the sufficient conditions of convergence rate $o(1/k)$ are: (1) $u_k\geq u_{k+1}$, and (2) $\sum\nolimits_{k=0}^\infty u_k$ is bounded, and (3) $u_k\geq0$ (Lemma 1.2 in \cite{deng2017parallel}). Since we have proved the first two conditions and the third one $u_k \geq 0$ is obvious, the convergence rate of $o(1/k)$ is proven. 
\end{proof}
\section{Algorithms for Applications}
\label{sec: other applications}
\subsection{Weakly-constrained Multi-task Learning}
\label{sec:multi-task learning}
Applying the proposed ADMM to solve the problem in Equation \eqref{prob: multi-task learning}, we get Algorithm \ref{algo: algorithm 3}.
Specifically, Lines 4-9 update primal variables $w_i(i=1,\cdots,n)$ and $z_i(i=1,\cdots,n)$, Line 10 updates the dual variable $y_i(i=1,\cdots,n)$.
\begin{algorithm} 
\caption{The Proposed ADMM to Solve Equation \eqref{prob: multi-task learning}.} 
\begin{algorithmic}[1]
\label{algo: algorithm 3}
\scriptsize
\STATE Denote $z=[z_1;\cdots;z_n]$,$y=[y_1;\cdots;y_n]$.
\STATE Initialize $\rho$, $k=0$.
\REPEAT
\STATE Update $w_1^{k+1}$ by Equation \eqref{eq:update w1}. 
\FOR{i=2 to n-1}
\STATE Update $w^{k+1}_i$ by Equation \eqref{eq:update wi}.\\
\ENDFOR
\STATE Update $w^{k+1}_n$ by Equation \eqref{eq:update wn}.\\
\STATE Update $z_i^{k+1}$ by Equation \eqref{eq:update zi} in parallel.
\STATE $y_i^{k+1}\leftarrow y_i^k+\rho(w^{k+1}_i-z^{k+1}_i)\ (i=1,\cdots,n)$ in parallel.
\STATE $k\leftarrow k+1$.\\
\UNTIL convergence.
\STATE Output $w_i(i=1,\cdots,n),z$.
\end{algorithmic}
\end{algorithm}
\\\indent All subproblems are detailed as follows:\\
\textbf{1. Update $w^{k+1}$}
\\\indent The $w^{k+1}_i(i=1,\cdots,n)$ are updated as follows:
\begin{align}
    &w_1^{k+1} \leftarrow \arg\min\nolimits_{w_1} Loss_1(w_1)+\lambda_1\sum\nolimits_{j=1}^{m}c_1(w_{1,j}w^k_{2,j})+(\rho/2)\Vert w_1-z^k_1+y^k_1/\rho\Vert^2_2. \label{eq:update w1}\\
    &w_i^{k+1} \leftarrow \arg\min\nolimits_{w_i} Loss_i(w_i)+\lambda_1\sum\nolimits_{j=1}^{m}c_1(w_{i,j}w^k_{i+1,j})+\lambda_1\sum\nolimits_{j=1}^{m}c_1(w^{k+1}_{i-1,j}w_{i,j})+(\rho/2)\Vert w_i-z^k_i+y^k_i/\rho\Vert^2_2. \label{eq:update wi}
\\&w_n^{k+1} \leftarrow \arg\min\nolimits_{w_n} Loss_n(w_n)+\lambda_1\sum\nolimits_{j=1}^{m}c_1(w^{k+1}_{n-1,j}w_{n,j})+(\rho/2)\Vert w_n-z^k_n+y^k_n/\rho\Vert^2_2. \label{eq:update wn}
\end{align}
They can be solved by the Iterative Soft Thresholding Algorithm (ISTA) \cite{beck2009fast}. Take $w_1^{k+1}$ as an example, The ISTA leads to 
\begin{align*}
    w_1^{t+1}\leftarrow \lambda_1\sum\nolimits_{j=1}^{m}c_1(w_{1,j}w^k_{2,j})+1/(2\eta)\Vert w_1-(w_1^{t}-\eta\nabla\phi(w_1^{t}))\Vert^2_2.
\end{align*}
where $w_1^{t}$ is the $t$-th iteration in the ISTA, $\eta>0$ is a learning rate, $\phi(w^t_1)=Loss_1(w^t_1)+\rho/2\Vert w^t_1-z^k_1+y^k_1/\rho\Vert^2_2$. For each entry of $w^{t+1}_{1,j}(j=1,2,\cdots,m)$, we have the following closed-form solution as follows:\\
1). If $w^{t+1}_{1,j}w^k_{2,j}\leq 0$, then $w^{t+1}_{1,j}\leftarrow(w_{1,j}^{t}-\eta\nabla_j\phi(w_1^{t}))/(2\eta\lambda_1w^2_{2,j}+1)$.\\
2). If $w^{t+1}_{1,j}w^k_{2,j}\geq 0$, then
$w_{1,j}^{t+1}\leftarrow w_{1,j}^{t}-\eta\nabla_j\phi(w_1^{t})$. where $\nabla_j\phi(w_1^{t})$ is the $j$-th entry of $\nabla\phi(w_1^{t})$.\\
\textbf{2. Update $z^{k+1}$}
\\\indent The $z^{k+1}_i(i=1,\cdots,n)$ are updated as follows:
\begin{align}
    z_i^{k+1} \leftarrow \arg\min\nolimits_{z_i}\Omega_i(z_i)+(\rho/2)\Vert w^{k+1}_i-z_i+y^k_i/\rho\Vert^2_2 (i=1,\cdots,n).
    \label{eq:update zi}
\end{align}
For $\ell_1$ or $\ell_2$ regularization, they have closed-form solutions.
\subsection{Learning with Signed-Network Constraints}
\label{sec:muli-lingual}
Applying proposed ADMM to solve the problem in Equation \eqref{prob: muli-lingual}, we get Algorithm \ref{algo: algorithm 4}. Specifically, Lines 4-7 update primal variables $\beta_i(i=1,\cdots,n)$ and $z$, Line 8 updates the dual variable $y$. 
\begin{algorithm} 
\caption{The Proposed ADMM to Solve Equation \eqref{prob: muli-lingual}.} 
\begin{algorithmic}[1]
\scriptsize
\label{algo: algorithm 4}
\STATE Denote $z=[z_1;\cdots;z_n]$,$y=[y_1;\cdots;y_n]$. 
\STATE Initialize $\rho$, $k=0$.
\REPEAT
\FOR{i=1 to n}
\STATE  Update $\beta^{k+1}_i$ by Equation \eqref{eq:update beta}.
\ENDFOR
\STATE Update $z_i^{k+1}(i=1,\cdots,n)$ by Equation \eqref{eq:update zi2} in parallel.
\STATE $y_i^{k+1}\leftarrow y_i^k+\rho(\beta^{k+1}_i-z_i^{k+1})\ (i=1,\cdots,n)$ in parallel.
\STATE $k\leftarrow k+1$.\\
\UNTIL convergence.
\STATE Output $\beta_i(i=1,\cdots,n),z$.
\end{algorithmic}
\end{algorithm}
All subproblems are detailed as follows:\\
\textbf{1. Update $\beta^{k+1}$}
\\\indent The $\beta^{k+1}_i(i=1,\cdots,n)$ are updated as follows:
\begin{align}
    \nonumber\beta_i^{k+1} &\leftarrow \arg\min\nolimits_{\beta_i} Loss(\cdots,\beta^{k+1}_{i-1},\beta_i,\beta^{k}_{i+1},\cdots)+\lambda_2(\sum\nolimits_{(\beta_i,\beta^{k+1}_j)\in E_s,j<i} c_2(\beta_i,\beta^{k+1}_j)+\sum\nolimits_{(\beta_i,\beta^{k}_j)\in E_s,j>i} c_2(\beta_i,\beta^{k}_j)\\&+\sum\nolimits_{(\beta_i,\beta^{k+1}_q)\in E_d,q<i} c_3(\beta_i,\beta^{k+1}_q)+\sum\nolimits_{(\beta_i,\beta^{k}_q)\in E_d,q>i} c_3(\beta_i,\beta^{k}_q)+(\rho/2)\Vert \beta_i-z^k_i+y^k_i/\rho\Vert^2_2).\label{eq:update beta}
\end{align}
\indent Similar to updating $w^{k+1}_i$ in Algorithm \ref{algo: algorithm 3}, they can be solved efficiently by the ISTA \cite{beck2009fast}.\\
\textbf{2. Update $z^{k+1}$}
\\\indent The $z^{k+1}_i(i=1,\cdots,n)$ are updated as follows:
\begin{align}
    z_i^{k+1} \leftarrow \arg\min\nolimits_{z_i}\omega_i(z_i)+(\rho/2)\Vert \beta^{k+1}_i-z_i+y^k_i/\rho\Vert^2_2 (i=1,\cdots,n).\label{eq:update zi2}
\end{align}
\indent Similar to updating $z^{k+1}_i$ in Algorithm \ref{algo: algorithm 3}, they usually have closed-form solutions.
\end{appendix}
\end{document}